\documentclass{amsart}

\usepackage{graphicx}
\usepackage{multicol,multirow}
\usepackage{amsmath,amssymb,amsfonts}
\usepackage{mathrsfs}
\usepackage{amsthm}
\usepackage{rotating}
\usepackage{appendix}
\usepackage[numbers]{natbib}
\usepackage[dvipsnames]{xcolor}
\usepackage{tikz}
\usepackage{float}
\usepackage{geometry}
\usepackage{txfonts}
\usepackage{epstopdf}
\usepackage{subfig}
\usepackage{booktabs}
\usepackage{fancyhdr}

\allowdisplaybreaks[2]

\newtheorem{theorem}{Theorem}[section]
\newtheorem{lemma}[theorem]{Lemma}
\newtheorem{proposition}[theorem]{Proposition}
\newtheorem{corollary}[theorem]{Corollary}

\theoremstyle{definition}

\newtheorem{example}[theorem]{Example}

\theoremstyle{remark}
\newtheorem{remark}[theorem]{Remark}
\newtheorem{question}{Question}

\newcommand{\insshuff}{\mathbin{\widehat{\text{\font\fontWCA=wncyr10 {\fontWCA x}}}}}
\newcommand{\shuff}{\mathbin{\text{\font\fontWCA=wncyr10 {\fontWCA x}}}}

\numberwithin{equation}{section}

\title[Non-associative Hopf Algebra on Dyck Paths]{Non-associative Hopf Algebra on Dyck Paths Arising from the Shuffle-insertion Product}

\thanks{Y.~Guo and H.~Li are supported by the National Natural Science Foundation of China (No.~11701339). 
N.~Bergeron is supported by an NSERC Discovery Grant (No.~06171).}

\author{Yuchen Guo}
\author{Huilan Li}
\author{Nantel Bergeron}

\address{School of Mathematics and Statistics, Shandong Normal University, 1 Daxue Road, Jinan, Shandong 250358, China}
\email{a123456gyc@163.com}
\email{lihl@sdnu.edu.cn}

\address{Department of Mathematics and Statistics, York University, 4700 Keele Street, Toronto, ON M3J 1P3, Canada}
\email{bergeron@yorku.ca}

\subjclass[2020]{05E45, 05E40, 16T05}
\keywords{non-associative Hopf algebra, Dyck path, shuffle-insertion product, antipode, convolution product}

\begin{document}

\begin{abstract}
In this paper, we define a shuffle-insertion product and a cut-box coproduct on the vector space spanned by Dyck paths, where the product is commutative but non-associative. We prove that the product and coproduct are compatible, making the space a non-associative bialgebra. This bialgebra is connected and bigraded by path degree and peak number, and hence is a non-associative Hopf algebra. Its primitive subspace is spanned by atoms, and associators of primitive elements are primitive. Furthermore, we provide closed-form formulas for its left and right antipodes. We also define the convolution product on the space of linear transformations of the non-associative Hopf algebra, which equips this space with a non-associative algebra structure. Finally, we construct a one-parameter deformation preserving the cut-box coproduct. Its zero specialization is the ordinary shuffle Hopf algebra, which is also the associated graded algebra for the atom filtration, while every nonzero specialization is isomorphic to the shuffle-insertion algebra.
\end{abstract}

\maketitle
\section{introduction}

Dyck paths are among the most classical objects in enumerative combinatorics, appearing in diverse settings such as lattice path enumeration, noncrossing partitions, cluster algebras, and representation theory. Their ubiquity stems from the rich algebraic and combinatorial structures they encode, often governed by Catalan numbers and their generalizations. Over the last two decades, Dyck paths and related families of Catalan objects have provided concrete combinatorial models for Hopf algebras, both commutative and noncommutative, with deep connections to symmetric functions, operads, and geometry \cite{2,4,8,3,14}. 

In the theory of combinatorial Hopf algebras, the algebraic structures are typically associative. A central feature of such Hopf algebras is the antipode, which serves as the Hopf-theoretic analogue of the inverse in a group. While associative Hopf algebras enjoy a rich and well-developed theory, comparatively little is known in the non-associative setting (see, e.g., \cite{15,10,16,17,11,12,13}). Recent works suggest that non-associative Hopf algebras arise naturally in renormalization, operad theory, and in the study of free magma algebras. This raises the question of whether natural combinatorial models for non-associative Hopf algebras can be found and studied systematically.

The goal of this paper is to introduce and study a new Hopf algebraic structure on Dyck paths, based on a shuffle-insertion product. This product is commutative but non-associative, and together with a natural cut-box coproduct it endows the graded vector space spanned by Dyck paths with the structure of a non-associative Hopf algebra. We refer to this algebra as the \emph{shuffle-insertion Hopf algebra} of Dyck paths.

Our starting point is the unique decomposition of a Dyck path into atoms. The usual shuffle product interleaves these atoms, while the cut-box coproduct cuts between them. We ask whether one can introduce interactions inside the atoms while retaining this coproduct. Our insertion rule does this by placing consecutive atoms of one path into specified gaps of an atom of the other; excluding gaps between adjacent $0$ and $1$ preserves the existing peaks. Peak number is additive in every product term and under the cut-box coproduct, giving a bigrading by path degree and peak number. The resulting product remains commutative and compatible with cutting, but is non-associative. This provides a concrete model in which the consequences of non-associativity can be examined through explicit path calculations. In particular, for a path with three atoms, the difference between its left and right antipodes is exactly the negative of the corresponding product associator. This associator is primitive and hence a linear combination of atoms, connecting the antipode discrepancy with primitive operations.

The main contributions of this work are as follows:  We define the shuffle-insertion product on Dyck paths and show that it is commutative but not associative (Section~\ref{sec3}).
 We construct the cut-box coproduct and prove its compatibility with the product (Sections~\ref{sec4} and~\ref{sec5}).
    We describe explicitly the left and right antipodes of the Hopf algebra, and show that they differ in general (Section~\ref{sec6}).
    We establish the bigrading by path degree and peak number, identify the primitive subspace, and relate primitive associators to the difference between the left and right antipodes (Section~\ref{sec:structure}).
     We introduce the convolution product on the space of linear transformations and prove that it is non-associative  (Section~\ref{sec7}).
     We relate the shuffle-insertion product to ordinary shuffle through a coalgebra-compatible one-parameter deformation (Section~\ref{sec8}).
     Finally, we present several open questions concerning the algebraic and combinatorial properties of this Hopf algebra, including its relation to operads, graded duality, and potential noncommutative analogues  (Section~\ref{sec9}).

This work puts the shuffle-insertion Hopf algebra of Dyck paths within the framework of non-associative Hopf algebras. It highlights Dyck paths as a good combinatorial model for exploring non-associativity in Hopf algebra theory, while also opening new directions for Dyck path combinatorics through coalgebraic and antipode structures.

\noindent\textbf{Acknowledgment.} We thank the anonymous referee for the many interesting questions and thoughtful suggestions that helped us strengthen the paper.

%%%%%%%%%%%%%%%%%%%%%%%%%%%%-2-%%%%%%%%%%%%%%%%
\section{Preliminaries}\label{sec2}

%%%%%%%%%%%%%%%%%%%%%%%%%%%%-2.1-%%%%%%%%%%%%%%%
%\subsection{Basic Definitions}\hypertarget{sec2}
Here we recall some basic definitions related to Hopf algebras and non-associative Hopf algebras; see \cite{15,10,16,17,12} for more details. 

Suppose $R$ is a commutative ring. Let $H$ be an $R$-module with a \emph{product} ${m\colon H\otimes H\rightarrow H}$, a \emph{unit} $\mu\colon R\rightarrow H$, a \emph{coproduct} $\Delta\colon H\rightarrow H\otimes H$, and a \emph{counit} $\nu\colon H\rightarrow R$, where $\otimes$ denotes $\otimes_R$. If the diagrams in Figure~\ref{fig1} commute, where $\mathrm{id}$ denotes the identity map on $H$, then $(H, m, \mu)$ is an \emph{$R$-algebra}. If the diagrams in Figure~\ref{fig2} commute, then $(H, \Delta, \nu)$ is an \emph{$R$-coalgebra}.
\begin{figure}[h]
\begin{minipage}[t]{0.45\textwidth}
\begin{center}
\usetikzlibrary{shapes,arrows}
\tikzstyle{arrow} = [thick,->,>=stealth]
\begin{tikzpicture}[node distance=2cm,scale=.9]
\node (A) at(0,2) [] {$H\otimes H\otimes H$};
\node (B) at(3,2) [] {$H\otimes H$};
\node (C) at(0,0) [] {$H\otimes H$};
\node (D) at(3,0) [] {$H$};
\draw [arrow] (A) -- (B)node[midway, above] {$m\otimes$ id};
\draw [arrow] (C) -- (D)node[midway, above] {$m$};
\draw [arrow] (A) -- (C)node[midway, left] {id $\otimes m$};
\draw [arrow] (B) -- (D)node[midway, right] {$m$};
\end{tikzpicture}
\end{center}
\centering{\text{1(a)}}
\end{minipage}
\hfill
\begin{minipage}[t]{0.45\textwidth}
\begin{center}
\usetikzlibrary{shapes,arrows}
\tikzstyle{arrow} = [thick,->,>=stealth]
\begin{tikzpicture}[node distance=2cm,scale=.9]]
\node (A) at(2,2) [] {$H\otimes H$};
\node (B) at(0,0) [] {$H\otimes R$};
\node (C) at(2,0) [] {$H$};
\node (D) at(4,0) [] {$R\otimes H$};
\draw [arrow] (D) -- (C)node[midway, above] {$\cong$};
\draw [arrow] (B) -- (C)node[midway, above] {$\cong$};
\draw [arrow] (A) -- (C)node[midway, right] {$m$};
\draw [arrow] (B) -- (A)node[midway, left] {id $\otimes \mu$};
\draw [arrow] (D) -- (A)node[midway, right] {$\mu\otimes$ id};
\end{tikzpicture}
\end{center}
\centering{\text{1(b)}}
\end{minipage}
\caption{Associativity and Unit\label{fig1}} 
\end{figure}
\begin{figure}[h]
\begin{minipage}[t]{0.45\textwidth}
\begin{center}
\usetikzlibrary{shapes,arrows}
\tikzstyle{arrow} = [thick,->,>=stealth]
\begin{tikzpicture}[node distance=2cm,scale=.9]]
\node (A) at(0,2) [] {$H$};
\node (B) at(3,2) [] {$H\otimes H$};
\node (C) at(0,0) [] {$H\otimes H$};
\node (D) at(3,0) [] {$H\otimes H\otimes H$};
\draw [arrow] (A) -- (B)node[midway, above] {$\Delta$};
\draw [arrow] (C) -- (D)node[midway, above] {$\Delta\otimes$ id};
\draw [arrow] (A) -- (C)node[midway, left] {$\Delta$};
\draw [arrow] (B) -- (D)node[midway, right] {id $\otimes\Delta$};
\end{tikzpicture}
\end{center}
\end{minipage}
\hfill
\begin{minipage}[t]{0.45\textwidth}
\begin{center}
\usetikzlibrary{shapes,arrows}
\tikzstyle{arrow} = [thick,->,>=stealth]
\begin{tikzpicture}[node distance=2cm,scale=.9]]
\node (A) at(2,2) [] {$H\otimes H$};
\node (B) at(0,0) [] {$H\otimes R$};
\node (C) at(2,0) [] {$H$};
\node (D) at(4,0) [] {$R\otimes H$};
\draw [arrow] (C) -- (D)node[midway, above] {$\cong$};
\draw [arrow] (C) -- (B)node[midway, above] {$\cong$};
\draw [arrow] (C) -- (A)node[midway, right] {$\Delta$};
\draw [arrow] (A) -- (B)node[midway, left] {id $\otimes \nu$};
\draw [arrow] (A) -- (D)node[midway, right] {$\nu\otimes$ id};
\end{tikzpicture}
\end{center}
\end{minipage}
\caption{Coassociativity and Counit\label{fig2}} 
\end{figure}

If $(H, m, \mu)$ is an $R$-algebra, $(H, \Delta, \nu)$ is an $R$-coalgebra, and one of the following compatibility conditions holds:
\par
(i) $\Delta$ and $\nu$ are algebra homomorphisms,\par
(ii) $m$ and $\mu$ are coalgebra homomorphisms,\\
then the algebra and coalgebra structures are said to be \emph{compatible}, and $(H,m,\mu,\Delta,\nu)$ is an \emph{$R$-bialgebra}.

If there exists a linear map $\theta\colon H\rightarrow H$ satisfying
\begin{equation*}
  m\circ(\theta\otimes \text{id})\circ\Delta=\mu\circ\nu=m\circ(\text{id}\otimes\theta)\circ\Delta,
\end{equation*}
i.e., the diagram in Figure \ref{fig3} commutes, then $\theta$ is an \emph{antipode} of $H$. A bialgebra with an antipode is a \emph{Hopf algebra}. 
\begin{figure}[H]
	\centering
	\usetikzlibrary{shapes,arrows}
	\tikzstyle{arrow} = [thick,->,>=stealth]
	\begin{tikzpicture}[node distance=2cm,scale=.9]]
		\node (A) at(1.5,3) [] {$H\otimes H$};
		\node (B) at(4.5,3) [] {$H\otimes H$};
		\node (C) at(0,1.5) [] {$H$};
		\node (D) at(3,1.5) [] {$R$};
		\node (E) at(6,1.5) [] {$H$};
		\node (F) at(1.5,0) [] {$H\otimes H$};
		\node (G) at(4.5,0) [] {$H\otimes H$};
		\draw [arrow] (A) -- (B)node[midway, above] {$\theta\otimes$ id};
		\draw [arrow] (C) -- (A)node[midway, left] {$\Delta$};
		\draw [arrow] (B) -- (E)node[midway, right] {$m$};
		\draw [arrow] (C) -- (D)node[midway, above] {$\nu$};
		\draw [arrow] (D) -- (E)node[midway, above] {$\mu$};
		\draw [arrow] (C) -- (F)node[midway, left] {$\Delta$};
		\draw [arrow] (F) -- (G)node[midway, above] {id$\otimes \theta$};
		\draw [arrow] (G) -- (E)node[midway, right] {$m$};
	\end{tikzpicture}
	\caption{Antipode}\label{fig3}
\end{figure} 
\begin{figure}[H]
	\centering
	\usetikzlibrary{shapes,arrows}
	\tikzstyle{arrow} = [thick,->,>=stealth]
	\begin{tikzpicture}[node distance=2cm,scale=.9]
		\node (A) at(1.5,3) [] {$H\otimes H$};
		\node (B) at(4.5,3) [] {$H\otimes H$};
		\node (C) at(0,1.5) [] {$H$};
		\node (D) at(3,1.5) [] {$R$};
		\node (E) at(6,1.5) [] {$H$};
		\node (F) at(1.5,0) [] {$H\otimes H$};
		\node (G) at(4.5,0) [] {$H\otimes H$};
		\draw [arrow] (A) -- (B)node[midway, above] {$\theta_l\otimes$ id};
		\draw [arrow] (C) -- (A)node[midway, left] {$\Delta$};
		\draw [arrow] (B) -- (E)node[midway, right] {$m$};
		\draw [arrow] (C) -- (D)node[midway, above] {$\nu$};
		\draw [arrow] (D) -- (E)node[midway, above] {$\mu$};
		\draw [arrow] (C) -- (F)node[midway, left] {$\Delta$};
		\draw [arrow] (F) -- (G)node[midway, above] {id$\otimes \theta_r$};
		\draw [arrow] (G) -- (E)node[midway, right] {$m$};
	\end{tikzpicture}
	\caption{Left and Right Antipodes}\label{fig4}
\end{figure}

The algebra $(H,m,\mu)$ is \emph{graded} if $H=\bigoplus_{i\geq 0}H_i$ satisfies $m(H_i\otimes H_j)\subseteq H_{i+j}$ and $\mu(R)\subseteq H_0$ for any $i,j\geq0$. The coalgebra $(H,\Delta,\nu)$ is \emph{graded} if $H=\bigoplus_{i\geq 0}H_i$ satisfies $\Delta(H_n)\subseteq \bigoplus(H_i\otimes H_{n-i})$ and $\nu(H_n)=0$ for $n\geq1$. If $H=\bigoplus_{i\geq 0}H_i$ is both a graded algebra and a graded coalgebra, and a compatibility condition holds, then $H$ is a \emph{graded bialgebra}. A graded bialgebra $H=\bigoplus_{i\geq 0}H_i$ over $R$ is called \emph{connected} if $H_0=R1_H$. A \emph{graded connected bialgebra} is always a Hopf algebra, which admits a unique antipode \cite{17}.

If the product $m$ is non-associative, i.e., the diagram 1(a) in Figure~\ref{fig1} does not commute, then $(H, m, \mu)$ is called a \emph{non-associative $R$-algebra}. A \emph{non-associative bialgebra} $(H, m, \mu, \Delta, \nu)$ is defined analogously to a usual bialgebra, where the coalgebra structure remains coassociative but the algebra structure is non-associative. If there exist linear maps $\theta_l, \theta_r\colon H \to H$ satisfying
\begin{equation*}
  m\circ(\theta_l\otimes \text{id})\circ\Delta=\mu\circ\nu=m\circ(\text{id}\otimes\theta_r)\circ\Delta,
\end{equation*}
i.e., the diagram in Figure~\ref{fig4} commutes, then $\theta_l$ is called a \emph{left antipode} of $H$ and $\theta_r$ is called a \emph{right antipode} of $H$. A non-associative bialgebra equipped with left and right antipodes is called a \emph{non-associative Hopf algebra}, where the left and right antipodes do not necessarily coincide.

Throughout this paper, the term \emph{non-associative Hopf algebra} refers to the one-sided convolution identities above. In the division-based formulation of Mostovoy, P\'erez-Izquierdo, and Shestakov~\cite[Section~4.1]{11}, a non-associative Hopf algebra is equipped with bilinear left and right divisions satisfying cancellation identities. Our left and right antipodes are the linear maps specified above.

A non-associative bialgebra is called \emph{graded} if both the non-associative algebra and the coalgebra structures are graded. 
Similar to the associative case, it is not hard to show--degree by degree--that a graded connected non-associative bialgebra admits unique left and right antipodes,
and thus it is a non-associative Hopf algebra.

%%%%%%%%%%%%%%%%%%%%%%%%%%%%-2-2%%%%%%%%%%%%%%%%
%\subsection{Basic Concepts and Notations}
A \emph{Dyck path of degree $n$} is a lattice path above the line $y=x$ in the first quadrant of the $xy$-plane, from the point $(0,0)$ to the point $(n,n)$, using only steps $(0,1)$ and $(1,0)$ \cite{14}. Let $D_n$ be the set of all Dyck paths of degree $n$ with $D_0 = \{\epsilon\}$, where $\epsilon$ denotes the empty path. We represent the step $(0,1)$ by $0$ and the step $(1,0)$ by $1$. For example, the sequence ${010001100111}$ corresponds to the Dyck path of degree $6$ in Figure~\ref{fig5}.
\begin{figure}[H]
\centering
\includegraphics[width=1.5in]{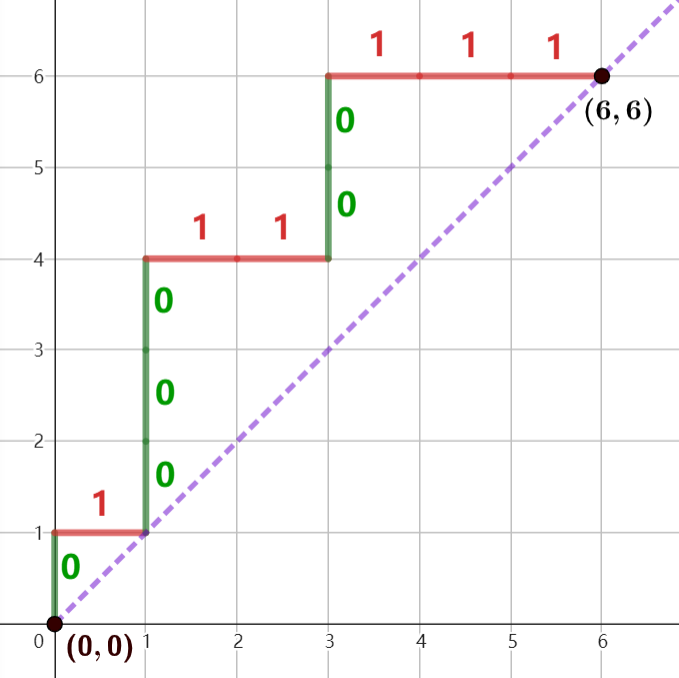}
{\caption{A Dyck Path of Degree 6}
\label{fig5}}
\end{figure}

Let $a=a_1a_2\cdots a_{2n}$ be a Dyck path of degree $n$, $a_i\in \{0,1\}$, $i=1,2,\ldots,2n$. For any $1\leq i\leq 2n-1$, the position between $a_i$ and $a_{i+1}$ is called the $i$-th \emph{gap}, the position to the left of $a_1$ is called the $0$-th  \emph{gap}, and the position to the right of $a_{2n}$ is called the $2n$-th  \emph{gap}. For example, the gaps of the Dyck path $010001100111$ are the numbers in the circles: $_{\tiny{\textcircled{0}}}0_{\tiny{\textcircled{1}}}1_{\tiny{\textcircled{2}}}0_{\tiny{\textcircled{3}}}0_{\tiny{\textcircled{4}}}0_{\tiny{\textcircled{5}}}1_
{\tiny{\textcircled{6}}}1_{\tiny{\textcircled{7}}}0_{\tiny{\textcircled{8}}}0_{\tiny{\textcircled{9}}}1_{\tiny{\textcircled{10}}}1_{\tiny{\textcircled{11}}}1_
{\tiny{\textcircled{12}}}$. Since the path remains above the line $y=x$, the number of 1's is less than or equal to the number of 0's in $a_1a_2\cdots a_{i}$ for any $0\leq i\leq 2n$. When they are equal, we call the $i$-th gap a \emph{splittable gap}. A splittable gap graphically corresponds to a position where the path touches the line $y=x$. In particular, the $0$-th and $2n$-th gaps are both splittable. The part between two adjacent splittable gaps is called an \emph{atom}. Let $A_{n}$ be the set of all atoms of degree $n$; then $A_{n}\subseteq D_{n}$ for $n\geq 1$. Let $\mathbb{A}:=\biguplus_{n\geq 1}A_n$, $\mathbb{D}:=\biguplus_{n\geq 0}D_n$, and $\mathbb{KD}=\bigoplus_{n \geq 0}\mathbb{K}D_n$, where $\mathbb{K}D_n$ is the vector space spanned by $D_n$ over a field $\mathbb{K}$ of characteristic $0$. 
 
Let $S_a=\{0,i_1,i_2,\ldots,i_{p-1},2n\}$ be the set of splittable gaps of $a$. Obviously, $i_1,i_2,\ldots,i_{p-1}$ are even numbers. 
Then $\alpha_1=a_1a_2\cdots a_{i_1}$, $\alpha_2=a_{i_1+1}a_{i_1+2}\cdots a_{i_2}$, $\ldots$, $\alpha_p=a_{i_{p-1}+1}a_{i_{p-1}+2}\cdots a_{2n}$ are the atoms of $a$, and we say that $a=\alpha_1\alpha_2\cdots \alpha_p$ is the \emph{(atom) decomposition} of $a$. Let $[k]$ denote the set $\{1,2,\ldots,k\}$ for any positive integer $k$, and let $[0]=\emptyset$.
Let $[i,j]=\{i, i+1,\ldots,j\}$ for $i\leq j$, and $[i,j]=\emptyset$ for $i>j$. We define a \emph{segment} of $a$ to be $\alpha_{[i,j]}:=\alpha_i\alpha_{i+1}\cdots\alpha_{j-1}\alpha_{j}$, where $1\leq i\leq j\leq p$.  We call the empty path $\epsilon$ and the whole path $a$ the \emph{trivial segments} of $a$. Hence, $\alpha_{[0]}=\alpha_{\emptyset}:=\epsilon$, $\alpha_{[i]}:=\alpha_1\alpha_2\cdots\alpha_i$, and $\alpha_{[p]\setminus[i]}:=\alpha_{i+1}\alpha_{i+2}\cdots\alpha_p$, for $1\leq i\leq p$. For example, the splittable gaps of the Dyck path $a=010011000111$ are $\{0,2,6,12\}$, and its atoms are $01$, $0011$, and $000111$. So $a=\alpha_1\alpha_2\alpha_3=(01)(0011)(000111)$ is the decomposition of $a$. Dyck paths $\alpha_{[2,3]}=\alpha_2\alpha_3=(0011)(000111)$, $\alpha_{[2,2]}=\alpha_2=(0011)$, and $\alpha_{[2]}=\alpha_{[1,2]}=\alpha_1\alpha_2=(01)(0011)$ are segments of $a$. A \emph{peak} of a Dyck word is an occurrence of the adjacent pair $01$. Write $\operatorname{pk}(a)$ for the number of peaks of $a$, with $\operatorname{pk}(\epsilon)=0$.

Given any atom of degree $m \geq 1$, we now define a natural \emph{pairing} of $0$'s and $1$'s. If we interpret $0$'s as open parentheses and $1$'s as close parentheses, then the well-known matching of open and close parentheses corresponds to pairing each $1$ with the nearest unmatched $0$ to its left. This results in a perfect matching, since there are always more $0$'s than $1$'s in any nonempty proper initial segment of an atom, and all $1$'s are matched with $0$'s by the end. We now place an open parenthesis at the gap immediately to the left of each $0$ and a close parenthesis at the gap immediately to the right of each $1$. For $1\leq i\leq2m-1$, we call gap $i$ an \emph{insertable gap} if there is a parenthesis at that gap; otherwise, it is a \emph{non-insertable gap}, meaning that it has a $0$ immediately to its left and a $1$ immediately to its right. Graphically, non-insertable gaps correspond to outer corners of the path.

For example, the atom ${0001100111}$ has the pairing
\[
(0{\color{blue}(0}{\color{brown}(01)}{\color{blue}1)}{\color{green}(0}{\color{orange}(01)}{\color{green}1)}1),
\]
where steps of the same color are paired. The insertable gaps are $\{1, 2, 4, 5, 6, 8, 9\}$. Note that the atom $(01)$ has no insertable gaps since its pairing is $(01)$.

%%%%%%%%%%%%%%%%%%%%%%%%%%%-3-%%%%%%%%%%%%%%%%
\section{Shuffle-insertion Product}\label{sec3}
In this section, we define the shuffle-insertion product and show that $\mathbb{KD}$ is a graded algebra with this product. 

For Dyck paths $a=\alpha_1\cdots\alpha_p$ and $b=\beta_1\cdots\beta_q$ of degrees $n$ and $m$, respectively, define their \emph{shuffle product} $a\shuff b$ as the sum of the $\binom{p+q}{p}$ interleavings of their atom sequences that preserve the relative order within each sequence. Each colored interleaving contributes once, and identical Dyck paths are collected with multiplicity. Equivalently, this product satisfies the following recursion:
\begin{equation*}
\begin{aligned}
&a\shuff  \epsilon = \epsilon\shuff  a=a,\\
&a\shuff {\color{red}b}=\alpha_1\alpha_2\cdots\alpha_p\shuff{\color{red}\beta_1\beta_2\cdots\beta_q}= \alpha_1(\alpha_2\cdots\alpha_p\shuff {\color{red}b})+{\color{red}\beta_1}(a\shuff{\color{red}\beta_2\cdots \beta_q}),
\end{aligned}
\end{equation*}
We define the \emph{unit} $\mu\colon\mathbb{K}\rightarrow\mathbb{KD}$ by $\mu(1)=\epsilon$.

It is easy to prove that the shuffle product is commutative and associative. The result of $a\shuff b$ is a sum of Dyck paths of degree $m+n$. Therefore, $(\mathbb{KD},\shuff ,\mu)$ is a graded algebra.
\begin{example}
For Dyck paths
$a=010011, {\color{red}b}={\color{red}0101}$,
\begin{equation*}
\begin{aligned} 
a\shuff {\color{red}b}
&=(01)(0011)\shuff {\color{red} (01)(01)}\\
		&=(01)(0011){\color{red} (01)(01)}+(01){\color{red}(01)}(0011){\color{red}(01)}\\
      &\ \ \ \ +(01){\color{red}(01)(01)}(0011)+{\color{red}(01)}(01)(0011){\color{red}(01)}\\
      &\ \ \ \ +{\color{red}(01)}(01){\color{red}(01)}(0011)+{\color{red}(01)(01)}(01)(0011).
\end{aligned}
\end{equation*}
The colors record which input path contributes each atom. Different colored shuffles may represent the same basis element of $\mathbb{KD}$. After forgetting the colors, their contributions are collected with multiplicities. In particular, the path $(01)(01)(0011)(01)$ occurs with coefficient $2$.
Collecting identical paths, the six contributions above become
\begin{equation*}
\begin{aligned}
a\shuff b
&=(01)(0011)(01)(01)\\
&\quad+2(01)(01)(0011)(01)\\
&\quad+3(01)(01)(01)(0011).
\end{aligned}
\end{equation*}
\end{example}
\begin{remark}\label{rem:shuff}
A useful expression for the shuffle product is
\begin{equation*}
\begin{aligned} 
\alpha_1\alpha_2\cdots\alpha_p&\shuff {\color{red}\beta_1\beta_2\cdots\beta_q}\\
        &= a{\color{red}b}+\sum_{k=1}^{\min(p,q)} \sum_{\substack{1=i_0 \le i_1 < i_2 < \cdots < i_{k+1} = p+1 \\ 1=j_0 < j_1 < \cdots < j_k \le j_{k+1} = q+1}} 
         \hskip-16pt \alpha_{[i_0,i_1-1]} {\color{red}\beta_{[j_0,j_1-1]}} \cdots
         \alpha_{[i_k,i_{k+1}-1]} {\color{red}\beta_{[j_k,j_{k+1}-1]}}\,.
\end{aligned}
\end{equation*}
Each colored shuffle of the two colored atom sequences is represented exactly once. Different colored shuffles may yield the same Dyck path after the colors are forgotten; their contributions are then combined.
\end{remark}

We now use our notion of insertable gaps to deform the shuffle product. First, we define an \emph{insertion operation} $\divideontimes$ of a Dyck path into an atom:
\begin{equation*}
\begin{aligned}	
	\divideontimes\colon \mathbb{D}\times\mathbb{A}&\rightarrow\mathbb{KD}\\
~~~~~~~~~(a,{\color{red}\beta})&\mapsto a\divideontimes{\color{red}\beta},
\end{aligned}
\end{equation*}
where $a \divideontimes {\color{red}\beta}$ denotes the sum of atoms obtained by inserting all atoms of $a$ into the insertable gaps of ${\color{red}\beta}$, while preserving the relative order of the atoms in $a$. If ${\color{red}\beta}$ has no insertable gaps, i.e., ${\color{red}\beta}$ is of degree $1$, we define $a \divideontimes {\color{red}\beta} = 0$. We say that ``$a$ is inserted into {${\color{red}\beta}$}'' to describe this operation.
\begin{example} For the Dyck path $a=010011$, its decomposition is $a=\alpha_1\alpha_2$ with $\alpha_1=(01), \alpha_2=(0(01)1)$. For the atom ${\color{red}\beta}={\color{red}001011}={\color{red}(0(01)(01)1)}$, there are three insertable gaps $1,3,5$. We insert $a$ into ${\color{red}\beta}$ and get
\begin{equation*}
\begin{aligned}
010011\divideontimes{\color{red}001011}&=
(01)(0(01)1)\divideontimes{\color{red}(0(01)(01)1)}\\
&=
{\color{red}(0}(01)(0(01)1){\color{red}(01)(01)1)}+{\color{red}(0}(01){\color{red}(01)}(0(01)1){\color{red}(01)1)}\\
&\ \ \ +{\color{red}(0}(01){\color{red}(01)(01)}(0(01)1){\color{red}1)}+{\color{red}(0(01)}(01)(0(01)1){\color{red}(01)1)}\\
&\ \ \ +{\color{red}(0(01)}(01){\color{red}(01)}(0(01)1){\color{red}1)}+{\color{red}(0(01)(01)}(01)(0(01)1){\color{red}1)}.\\
\end{aligned}
\end{equation*}
\end{example}
\begin{lemma}\label{lem:insainb}
For any Dyck path  $a$ and any atom ${\color{red}\beta}$ of degree greater than $1$,  all terms in $a \divideontimes {\color{red}\beta}$ are atoms.
\end{lemma}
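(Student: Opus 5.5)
We track the height function: for a word $w=w_1\cdots w_N$ over $\{0,1\}$, set $h_w(i)=\#\{0\text{'s in }w_1\cdots w_i\}-\#\{1\text{'s in }w_1\cdots w_i\}$. A word is a Dyck path exactly when $h_w(i)\ge 0$ for all $i$ and $h_w(N)=0$. It is an atom exactly when, in addition, $h_w(i)>0$ for all $0<i<N$, i.e.\ its only splittable gaps are $0$ and $N$.

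Fix a term $t$ of $a\divideontimes\beta$, with $\beta$ an atom of degree $m\ge 2$ and length $2m$. The term is obtained by choosing insertable gaps $g_1\le g_2\le\cdots$ of $\beta$, each with $1\le g_j\le 2m-1$, and placing consecutive blocks of atoms of $a$ into them in order. Each inserted block is a concatenation of atoms, hence a Dyck path, so it has nonnegative height throughout and net height change $0$.

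We then compute the heights of $t$ position by position.

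\begin{itemize}
\item At a letter of $t$ coming from $\beta$, say $\beta_k$, the height of $t$ equals $h_\beta(k)$, because every block inserted before it contributes net zero.
\item At a letter inside an inserted block sitting at gap $g$ of $\beta$, the height of $t$ is $h_\beta(g)$ plus the height reached so far inside the block, which is nonnegative.
\end{itemize}

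Since $\beta$ is an atom and $1\le g\le 2m-1$, we have $h_\beta(g)\ge 1$. So all heights inside inserted blocks, including at the block boundaries, are at least $1$. The heights at letters $\beta_k$ with $1\le k\le 2m-1$ are also at least $1$, and the height at the final letter $\beta_{2m}$ is $0$.

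It remains to check that $t$ starts with $\beta_1$ and ends with $\beta_{2m}$, i.e.\ that nothing is inserted at gaps $0$ or $2m$. This holds because the definition of insertable gap restricts to $1\le i\le 2m-1$. Consequently the height of $t$ is positive at every interior gap and zero at both ends, so $t$ has no splittable gaps other than $0$ and its final gap, and $t$ is an atom. Its degree is $n+m$.

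The argument needs no case split. The one point requiring care is the boundary-gap observation: the whole claim rests on insertion never occurring at gap $0$ or gap $2m$ of $\beta$. We would state this explicitly from the definition of insertable gaps. The peak-preserving role of the non-insertable gaps is irrelevant here.
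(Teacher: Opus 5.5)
Your proof is correct and follows the same idea as the paper's one-line argument: the inserted atoms sit at interior gaps of $\beta$, hence at height at least $1$ above the diagonal, so no new splittable gap is created. Your height-function bookkeeping, together with the explicit observation that gaps $0$ and $2m$ are never insertable, makes that argument fully rigorous.
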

\begin{proof}
In any term of $a \divideontimes {\color{red}\beta}$, all atoms of $a$ are inserted at levels strictly above the line $y = x$. Therefore, such insertions do not introduce any return to the line $y = x$, and the resulting path remains an atom.  
\end{proof}

Let $a=\alpha_1\cdots\alpha_p$ and ${\color{red}b}={\color{red}\beta_1\cdots\beta_q}$ be nonempty Dyck paths. Denote by $\operatorname{Sh}(a,{\color{red}b})$ the set of shuffles of their atom sequences, preserving the order of the atoms from each path. Throughout the construction, each atom retains its origin and index, even when two atoms represent the same Dyck path.
Fix $w\in\operatorname{Sh}(a,{\color{red}b})$. An \emph{insertion block} of $w$ is a consecutive block of atoms
\[
\gamma_1\cdots\gamma_s\eta,\qquad s\geq1,
\]
in which $\gamma_1,\ldots,\gamma_s$ all come from one input path and the receiving atom $\eta$ comes from the other.

Choose any collection of pairwise disjoint insertion blocks of $w$, including the empty collection. Replace each chosen block by
\[
(\gamma_1\cdots\gamma_s)\divideontimes\eta,
\]
leave the other atoms unchanged, and concatenate the resulting expressions, distributing over sums. All insertions are performed simultaneously using the original atoms of $w$. Thus an atom participates in at most one insertion block, and insertions cannot be nested within a single product.
Let $\tau(w)$ be the sum over all such collections. The empty collection contributes $w$ itself. 

Define the \emph{shuffle-insertion product} by
\begin{equation}\label{shuffle}
a\insshuff {\color{red}b}=\sum_{w\in\operatorname{Sh}(a,{\color{red}b})}\tau(w).
\end{equation}
Finally, forget the colors and collect identical Dyck paths with their multiplicities. Extend the product bilinearly and set $a\insshuff\epsilon=\epsilon\insshuff a=a$.
We also define the \emph{unit}  $\mu\colon\mathbb{K}\rightarrow\mathbb{KD}$ by $\mu(1)=\epsilon$.

\begin{example}
For Dyck paths $a=\alpha_1\alpha_2$ and ${\color{red}b}={\color{red}\beta_1\beta_2\beta_3\beta_4}$, consider the shuffle $w={\color{red}\beta_1}\alpha_1{\color{red}\beta_2\beta_3}\alpha_2{\color{red}\beta_4}$. The collections of disjoint insertion blocks give
\begin{equation}\label{example3}
\begin{aligned}
 \tau({\color{red}\beta_1}\alpha_1{\color{red}\beta_2\beta_3}\alpha_2{\color{red}\beta_4})
&= {\color{red}\beta_1}\alpha_1{\color{red}\beta_2\beta_3}\alpha_2{\color{red}\beta_4}
 + ({\color{red}\beta_1}\divideontimes\alpha_1){\color{red}\beta_2\beta_3}\alpha_2{\color{red}\beta_4}
 +{\color{red}\beta_1}(\alpha_1\divideontimes{\color{red}\beta_2}){\color{red}\beta_3}\alpha_2{\color{red}\beta_4}\\
&\quad
+{\color{red}\beta_1}\alpha_1{\color{red}\beta_2}({\color{red}\beta_3}\divideontimes\alpha_2){\color{red}\beta_4}
+{\color{red}\beta_1}\alpha_1({\color{red}\beta_2\beta_3}\divideontimes\alpha_2){\color{red}\beta_4}
+ {\color{red}\beta_1}\alpha_1{\color{red}\beta_2\beta_3}(\alpha_2\divideontimes{\color{red}\beta_4})\\
&\quad
+({\color{red}\beta_1}\divideontimes\alpha_1){\color{red}\beta_2}({\color{red}\beta_3}\divideontimes\alpha_2){\color{red}\beta_4}
+({\color{red}\beta_1}\divideontimes\alpha_1)({\color{red}\beta_2\beta_3}\divideontimes\alpha_2){\color{red}\beta_4}\\
&\quad
+({\color{red}\beta_1}\divideontimes\alpha_1){\color{red}\beta_2\beta_3}(\alpha_2\divideontimes{\color{red}\beta_4})+{\color{red}\beta_1}(\alpha_1\divideontimes{\color{red}\beta_2})({\color{red}\beta_3}\divideontimes\alpha_2){\color{red}\beta_4}\\
&\quad
+{\color{red}\beta_1}(\alpha_1\divideontimes{\color{red}\beta_2}){\color{red}\beta_3}(\alpha_2\divideontimes{\color{red}\beta_4}).\\
\end{aligned}
\end{equation}
We remark that any term in $\tau({\color{red}\beta_1}\alpha_1{\color{red}\beta_2\beta_3}\alpha_2{\color{red}\beta_4})$ is composed of atoms from the set
\begin{equation*}
\{ {\color{red}\beta_1}, \alpha_1, {\color{red}\beta_2}, {\color{red}\beta_3}, \alpha_2, {\color{red}\beta_4}, ({\color{red}\beta_1}\divideontimes\alpha_1), (\alpha_1\divideontimes{\color{red}\beta_2}), 
({\color{red}\beta_3}\divideontimes\alpha_2), ({\color{red}\beta_2\beta_3}\divideontimes\alpha_2), (\alpha_2\divideontimes{\color{red}\beta_4}) \},
\end{equation*}
where it is understood that $({\color{red}\beta_1}\divideontimes\alpha_1)$ denotes all terms of $({\color{red}\beta_1}\divideontimes\alpha_1)$ that are atoms, as per Lemma~\ref{lem:insainb}, and similarly for  $(\alpha_1\divideontimes{\color{red}\beta_2}), 
({\color{red}\beta_3}\divideontimes\alpha_2), ({\color{red}\beta_2\beta_3}\divideontimes\alpha_2)$ and $(\alpha_2\divideontimes{\color{red}\beta_4})$.
Note that in any term of $\tau({\color{red}\beta_1}\alpha_1{\color{red}\beta_2\beta_3}\alpha_2{\color{red}\beta_4})$, each atom from $a$ and ${\color{red}b}$ appears exactly once and in the same relative position as in $a$ and ${\color{red}b}$.

For atoms $\alpha_1=(01)$, $\alpha_2=(0(01)1)$, ${\color{red}\beta_1}={\color{red}(0(01)1)}$, ${\color{red}\beta_2}={\color{red}(01)}$, ${\color{red}\beta_3}={\color{red}(0(01)1)}$ and ${\color{red}\beta_4}={\color{red}(01)}$, we have  ${\color{red}\beta_1}\divideontimes\alpha_1=\alpha_1\divideontimes{\color{red}\beta_2}=\alpha_2\divideontimes{\color{red}\beta_4}=0$ since $\alpha_1$, ${\color{red}\beta_2}$ and ${\color{red}\beta_4}$ are of degree 1. Then
the non-zero terms in (\ref{example3}) are 
${\color{red}\beta_1}\alpha_1{\color{red}\beta_2\beta_3}\alpha_2{\color{red}\beta_4}$, ${\color{red}\beta_1}\alpha_1{\color{red}\beta_2}({\color{red}\beta_3}\divideontimes\alpha_2){\color{red}\beta_4}$, and ${\color{red}\beta_1}\alpha_1({\color{red}\beta_2\beta_3}\divideontimes\alpha_2){\color{red}\beta_4}$, where
 \begin{equation*}
\begin{aligned}
 {\color{red}\beta_3}\divideontimes\alpha_2&=(0{\color{red}(0(01)1)}(01)1)+(0(01){\color{red}(0(01)1)}1),\\
 {\color{red}\beta_2\beta_3}\divideontimes\alpha_2&=(0{\color{red}(01)(0(01)1)}(01)1)+(0{\color{red}(01)}(01){\color{red}(0(01)1)}1)+(0(01){\color{red}(01)(0(01)1)}1).\\
 \end{aligned}
\end{equation*}
 So
\begin{equation*}
\begin{aligned}
\tau({\color{red}(0(01)1)}(01){\color{red}(01)(0(01)1)}(0(01)1){\color{red}(01)})&=
{\color{red}(0(01)1)}(01){\color{red}(01)(0(01)1)}(0(01)1){\color{red}(01)}\\
&\ \ \ 
+{\color{red}(0(01)1)}(01){\color{red}(01)}(0{\color{red}(0(01)1)}(01)1){\color{red}(01)}\\
&\ \ \
+{\color{red}(0(01)1)}(01){\color{red}(01)}(0(01){\color{red}(0(01)1)}1){\color{red}(01)}\\
&\ \ \ 
+{\color{red}(0(01)1)}(01)(0{\color{red}(01)(0(01)1)}(01)1){\color{red}(01)}\\
&\ \ \ 
+{\color{red}(0(01)1)}(01)(0{\color{red}(01)}(01){\color{red}(0(01)1)}1){\color{red}(01)}\\
&\ \ \
+{\color{red}(0(01)1)}(01)(0(01){\color{red}(01)(0(01)1)}1){\color{red}(01)}.\\
\end{aligned}
\end{equation*}
\end{example}

%%%%

\begin{remark}\label{rem:atominins}
If $a$ and ${\color{red}b}$ are Dyck paths with decompositions $a = \alpha_1 \alpha_2 \cdots \alpha_p$ and ${\color{red}b} = {\color{red} \beta_1 \beta_2 \cdots \beta_q}$, then any term of $a \insshuff {\color{red}b}$ is composed of the atoms from the set
\[
  \big\{ \alpha_i, {\color{red}\beta_j}, \mathcal{A}(\alpha_{[i,i+s]} \divideontimes {\color{red}\beta_j}),\ \mathcal{A}({\color{red}\beta_{[j,j+t]}} \divideontimes \alpha_i) \ \big|\ 
  1 \le i  \le i+s \le p,\ 1 \le j \le j+t \le q \big\},
\]
where  $\mathcal{A}(\alpha_{[i,i+s]} \divideontimes {\color{red}\beta_j})$ and $\mathcal{A}({\color{red}\beta_{[j,j+t]}} \divideontimes \alpha_i)$ denote all the atoms, guaranteed by Lemma~\ref{lem:insainb}, in the respective sums.
Moreover, in any term of $a \insshuff {\color{red}b}$, the atoms $\alpha_i$ and ${\color{red}\beta_j}$ appear exactly once and in the same relative order as in $a$ and ${\color{red}b}$.
 \end{remark}
 
 \begin{example}
Let $a=\alpha_1\alpha_2$ and ${\color{red}b}={\color{red}\beta_1\beta_2}$ be Dyck paths. We use two atoms in each path to illustrate two features of the definition: a segment consisting of two atoms may be inserted into a single receiving atom, and two disjoint insertions may be performed simultaneously. We first list the six shuffles of the atoms of $a$ and ${\color{red}b}$:
\begin{equation*}
a\shuff {\color{red}b}=\alpha_1\alpha_2{\color{red} \beta_1}{\color{red} \beta_2}+\alpha_1{\color{red} \beta_1}\alpha_2{\color{red} \beta_2}+\alpha_1{\color{red} \beta_1}{\color{red} \beta_2}\alpha_2+
 {\color{red} \beta_1}\alpha_1\alpha_2{\color{red} \beta_2}+{\color{red} \beta_1}\alpha_1{\color{red} \beta_2}\alpha_2+{\color{red} \beta_1}{\color{red} \beta_2}\alpha_1\alpha_2.
\end{equation*}
Applying $\tau$ to each shuffle gives the corresponding parenthesized group in the expansion below. For example, the shuffle $\alpha_1\alpha_2{\color{red}\beta_1\beta_2}$ contributes the unchanged word, the insertion of $\alpha_2$ into ${\color{red}\beta_1}$, and the insertion of the segment $\alpha_1\alpha_2$ into ${\color{red}\beta_1}$. The alternating shuffle $\alpha_1{\color{red}\beta_1}\alpha_2{\color{red}\beta_2}$ also permits the simultaneous insertions of $\alpha_1$ into ${\color{red}\beta_1}$ and $\alpha_2$ into ${\color{red}\beta_2}$. These choices illustrate the types of atoms described in Remark~\ref{rem:atominins}.
\begin{equation*}
	\begin{aligned}
a\insshuff {\color{red}b}&=
	\tau(\alpha_1\alpha_2{\color{red} \beta_1}{\color{red} \beta_2})+\tau(\alpha_1{\color{red} \beta_1}\alpha_2{\color{red} \beta_2})+\tau(\alpha_1{\color{red} \beta_1}{\color{red} \beta_2}\alpha_2) +
 \tau({\color{red} \beta_1}\alpha_1\alpha_2{\color{red} \beta_2})+\tau({\color{red} \beta_1}\alpha_1{\color{red} \beta_2}\alpha_2)+\tau({\color{red} \beta_1}{\color{red} \beta_2}\alpha_1\alpha_2)\\
		&=
		\Big(\alpha_1\alpha_2{\color{red} \beta_1}{\color{red} \beta_2}+\alpha_1(\alpha_2\divideontimes{\color{red} \beta_1}){\color{red} \beta_2}+(\alpha_1\alpha_2\divideontimes{\color{red} \beta_1}){\color{red} \beta_2}\Big)\\
		&\ \ \ \  +
		\Big(\alpha_1{\color{red} \beta_1}\alpha_2{\color{red} \beta_2}+(\alpha_1\divideontimes{\color{red} \beta_1})\alpha_2{\color{red} \beta_2}+\alpha_1({\color{red} \beta_1}\divideontimes\alpha_2){\color{red} \beta_2}+\alpha_1{\color{red} \beta_1}(\alpha_2\divideontimes{\color{red} \beta_2})+(\alpha_1\divideontimes{\color{red} \beta_1})(\alpha_2\divideontimes{\color{red} \beta_2})\Big)\\
		&\ \ \ \  +
		\Big(\alpha_1{\color{red} \beta_1}{\color{red} \beta_2}\alpha_2+(\alpha_1\divideontimes{\color{red} \beta_1}){\color{red} \beta_2}\alpha_2
        +\alpha_1{\color{red} \beta_1}({\color{red} \beta_2}\divideontimes\alpha_2)+\alpha_1({\color{red} \beta_1}{\color{red} \beta_2}\divideontimes\alpha_2)+(\alpha_1\divideontimes{\color{red} \beta_1})({\color{red} \beta_2}\divideontimes\alpha_2)\Big)\\
		&\ \ \ \ +
		\Big({\color{red} \beta_1}\alpha_1\alpha_2{\color{red} \beta_2}+({\color{red} \beta_1}\divideontimes\alpha_1)\alpha_2{\color{red} \beta_2}+{\color{red} \beta_1}\alpha_1(\alpha_2\divideontimes{\color{red} \beta_2})+{\color{red} \beta_1}(\alpha_1\alpha_2\divideontimes{\color{red} \beta_2})+({\color{red} \beta_1}\divideontimes\alpha_1)(\alpha_2\divideontimes{\color{red} \beta_2})\Big)\\
		& \ \ \ \ +
		\Big({\color{red} \beta_1}\alpha_1{\color{red} \beta_2}\alpha_2+({\color{red} \beta_1}\divideontimes\alpha_1){\color{red} \beta_2}\alpha_2+{\color{red} \beta_1}(\alpha_1\divideontimes{\color{red} \beta_2})\alpha_2 +{\color{red} \beta_1}\alpha_1({\color{red} \beta_2}\divideontimes\alpha_2)
+({\color{red} \beta_1}\divideontimes\alpha_1)({\color{red} \beta_2}\divideontimes\alpha_2)\Big)\\
		& \ \ \ \ +
		\Big({\color{red} \beta_1}{\color{red} \beta_2}\alpha_1\alpha_2+{\color{red} \beta_1}({\color{red} \beta_2}\divideontimes\alpha_1)\alpha_2+({\color{red} \beta_1}{\color{red} \beta_2}\divideontimes\alpha_1)\alpha_2\Big).
	\end{aligned}
\end{equation*}
\end{example}

\begin{theorem}\label{theorem1}
$(\mathbb{KD},\insshuff ,\mu)$ is a commutative non-associative graded algebra.
\end{theorem}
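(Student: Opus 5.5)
The proof splits into four separate claims: the operation is well defined and graded, $\epsilon$ is a two-sided unit, the product is commutative, and it fails to be associative.

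\emph{Grading and unit.} For grading, every term of $\tau(w)$ with $w\in\operatorname{Sh}(a,b)$ uses each atom of $a$ and of $b$ exactly once; this is Remark~\ref{rem:atominins}. Insertion only places the up and down steps of inserted atoms into gaps of a receiving atom. Hence it preserves the total number of $0$'s and of $1$'s. By Lemma~\ref{lem:insainb} each replaced block is a sum of atoms, so every term is a concatenation of atoms, hence a Dyck path of degree $\deg a+\deg b$. Therefore $\insshuff$ maps $\mathbb{K}D_n\otimes\mathbb{K}D_m$ into $\mathbb{K}D_{n+m}$, and $\mu(\mathbb{K})=\mathbb{K}D_0$. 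The unit axiom (diagram 1(b)) holds by the definition $a\insshuff\epsilon=\epsilon\insshuff a=a$, extended bilinearly.

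\emph{Commutativity.} I would use the colour-swapping bijection $\operatorname{Sh}(a,b)\to\operatorname{Sh}(b,a)$, which sends a shuffle word $w$ to the same word with the roles of the two input paths exchanged. The definition of an insertion block is symmetric: it is a run of atoms $\gamma_1\cdots\gamma_s$ from one input followed by a receiving atom $\eta$ from the other input. So the collections of pairwise disjoint insertion blocks of $w$, viewed as a shuffle of $(a,b)$ or of $(b,a)$, are literally the same sets of positions. The replaced expressions $(\gamma_1\cdots\gamma_s)\divideontimes\eta$ are also identical. Hence $\tau(w)$ is the same element in both readings, and summing gives $a\insshuff b=b\insshuff a$ on basis elements. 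Commutativity then follows by bilinearity. The only care needed is that the multiplicities are counted on coloured objects, and the bijection respects this.

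\emph{Non-associativity.} This needs one explicit counterexample, and it is the main step. I would take small atoms, for example $x=y=(01)$ and $z=(0011)$, or $x=(01)$, $y=(0011)$, $z=(0011)$, and compute $(x\insshuff y)\insshuff z$ and $x\insshuff(y\insshuff z)$.

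The expected mechanism is as follows. In $(x\insshuff y)\insshuff z$ a segment such as $xy$ can be inserted as a block into $z$. In $x\insshuff(y\insshuff z)$, nested insertions such as inserting $x$ into $y\divideontimes z$ can appear, using gaps created inside the inserted atom $y$. Similarly, the gap structure of a product atom differs from that of the original atoms, so one expects a discrepancy in the coefficient of some atom such as $(0\,0011\,01\,1)$ or $(0(01)(01)1)$.

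Concretely, I would first compute $(01)\insshuff(01)=2(01)(01)$; no insertions occur here because degree-$1$ atoms have no insertable gaps. Then I would compute $(01)\insshuff(0011)$, and then both triple products restricted to the atom part, i.e.\ their degree-$4$ atoms. For $x=y=(01)$, $z=(0011)$:
\begin{itemize}
\item In $(x\insshuff y)\insshuff z=2\,(01)(01)\insshuff(0011)$, inserting the segment $(01)(01)$ into $(0011)$ gives atoms with coefficient $2$.
\item In $x\insshuff(y\insshuff z)$, the atoms arise from inserting $(01)$ into an atom of $y\insshuff z$, namely $(0(01)1)$ and its insertion products $(0(01)(01)1)$, $(0(0011)1)$ and so on. Comparing coefficients of $(00101011)$ should then exhibit the difference.
\end{itemize}
If this small example happens to be associative, I would move to $z$ of degree $3$ (e.g.\ $(000111)$) and compare the coefficient of a single atom, doing the calculation by hand. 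Producing this explicit witness is the main obstacle.
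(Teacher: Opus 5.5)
\textbf{Gap: non-associativity is never actually proved.} Your arguments for grading, the unit and commutativity are fine. The colour-swap bijection is a more explicit version of the paper's appeal to commutativity of $\shuff$. The gap is non-associativity, which is the substantive part of the theorem. You never produce a witness, and the computation you do sketch cannot produce one.

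For $x=y=(01)$, $z=(0011)$, both bracketings equal $2\bigl((01)(01)\insshuff(0011)\bigr)$. In particular, the coefficient of $00101011$ is $6$ on each side, so there is no discrepancy to find. This is not an accident of small degree. Write $p=(01)$. For every atom $z$,
\[
(p\insshuff p)\insshuff z-p\insshuff(p\insshuff z)=2\,(pp)\divideontimes z-p\divideontimes(p\divideontimes z)=0,
\]
because each placement of two copies of $p$ into gaps $i\le j$ of $z$ is counted exactly twice on each side. So your fallback of enlarging $z$ while keeping $x=y=(01)$ will never work. The mechanism you describe, insertion into gaps created inside the inserted atom $y$, requires $y$ to have insertable gaps, and $(01)$ has none.

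\textbf{What is needed.} You need a triple in which an atom ends up nested two levels deep, and the two bracketings count such nestings differently. For atoms $x,y,z$:
in $(x\insshuff y)\insshuff z$, a doubly nested $x$ can only lie inside $y$ inside $z$;
in $x\insshuff(y\insshuff z)$, it may also lie inside $z$ inside $y$.

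Your second candidate does realize this. Take $p=(01)$, $u=(0011)$, $v=(001011)$, and consider the atom $(0\,v\,p\,1)=0001011011$. Its coefficient in $(p\insshuff u)\insshuff u$ is $2$, since it arises only from $2v\divideontimes u$. Its coefficient in $p\insshuff(u\insshuff u)$ is $4$: insert $p$ into either of the two gaps inside the inner $u$ of the term $2(0\,u\,p\,1)$.

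The paper instead uses $\alpha=(0(01)(01)1)$, $\beta=(01)$, $\gamma=(0(01)1)$ and the atom $(0\delta(01)1)$. It rules out all other contributions by counting interior atoms. Until one such coefficient comparison is actually carried out, your proof of non-associativity is incomplete.
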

\begin{proof}
Since the shuffle product $\shuff$ is commutative, it follows that the shuffle-insertion product $\insshuff$ is also commutative.

For the second assertion, set
\[
\alpha=(0(01)(01)1),\qquad \beta=(01),\qquad
\gamma=(0(01)1),\qquad \delta=(0(01)(01)(01)1).
\]
Directly from the definition,
\begin{align*}
\alpha\insshuff\beta&=\alpha\beta+\beta\alpha+3\delta,\\
\beta\insshuff\gamma&=\beta\gamma+\gamma\beta+2\alpha.
\end{align*}
Consider the atom $w=(0\delta(01)1)=000101011011$. For any atom, call the atoms of the Dyck path obtained by deleting its first $0$ and last $1$ its \emph{interior atoms}. In particular, $w$ has two interior atoms, $\delta$ and $(01)$.

For each of $\alpha$, $\gamma$, and $\delta$, the insertable gaps occur before, between, and after the displayed copies of $(01)$ in its interior. Inserting $r$ atoms into these gaps therefore produces an atom with respectively $2+r$, $1+r$, and $3+r$ interior atoms. Moreover, an atomic contribution to a shuffle-insertion product of two nonempty paths must come from a single insertion block containing all atoms of both inputs.

In $\delta\insshuff\gamma$, inserting $\delta$ into $\gamma$ produces $w$ and $(0(01)\delta1)\neq w$; the insertion of $\gamma$ into $\delta$ produces atoms with four interior atoms. All atomic contributions to $(\alpha\beta)\insshuff\gamma$ and $(\beta\alpha)\insshuff\gamma$ have three interior atoms. Write $[w]F$ for the coefficient of $w$ in a shuffle-insertion product $F$. Thus
\[
[w]\big((\alpha\insshuff\beta)\insshuff\gamma\big)=3.
\]
On the other hand, the atomic contributions to $\alpha\insshuff(\beta\gamma)$ and $\alpha\insshuff(\gamma\beta)$ have four interior atoms, while those to $\alpha\insshuff\alpha$ have three. Consequently,
\[
[w]\big(\alpha\insshuff(\beta\insshuff\gamma)\big)=0.
\]
The two parenthesizations therefore have different coefficients of $w$, proving that $\insshuff$ is not associative.

Obviously, we have $a\insshuff b\in\mathbb{K}D_{n+m}$ for any $a\in D_{n}$ and $b\in D_{m}$, so $(\mathbb{KD},\insshuff ,\mu)$ is a graded algebra.
Hence, $(\mathbb{KD},\insshuff ,\mu)$ is a commutative non-associative graded algebra. 
\end{proof}

\begin{proposition}\label{prop:not-free}
The algebra $(\mathbb{KD},\insshuff,\mu)$ is not free as a unital commutative non-associative algebra.
\end{proposition}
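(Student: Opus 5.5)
The plan is a dimension count in low degrees. We compare the graded dimensions of $\mathbb{KD}$ with those of the free unital commutative non-associative (magmatic) algebra on a graded generating set. Suppose $\mathbb{KD}$ were free on a graded set $X=\biguplus_n X_n$ of homogeneous generators. Then $\mathbb{KD}_+=\bigoplus_{n\ge1}\mathbb{K}D_n$ would have to satisfy the following. A minimal generating space is isomorphic to the indecomposables $Q=\mathbb{KD}_+/(\mathbb{KD}_+\insshuff\mathbb{KD}_+)$, so $|X_n|=\dim Q_n$. In addition, $\dim\mathbb{K}D_n$ would be forced to equal the number of commutative non-associative monomials of degree $n$ in $X$.

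First we compute $\dim Q_n$ for small $n$. Every non-atom path $a=\alpha_1\cdots\alpha_p$ with $p\ge2$ equals the leading term of $\alpha_1\insshuff(\alpha_2\cdots\alpha_p)$ modulo other terms. So we filter by number of atoms and use the fact that the insertion terms are atoms with fewer components. By induction on the number of atoms, the decomposable space contains all non-atoms modulo atoms; it is spanned by products, whose reduction modulo the atoms is a shuffle. Hence $Q_n$ is a quotient of $\mathbb{K}A_n$, and it equals $\mathbb{K}A_n$ modulo the atoms appearing as insertion terms in the products. Concretely, $Q_1$ is spanned by $(01)$, giving $|X_1|=1$. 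In degree $2$, $(01)\insshuff(01)=2(01)(01)$, since insertion into $(01)$ is zero. Thus $(0011)$ is indecomposable and $|X_2|=1$.

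Now we count magmatic commutative monomials. With one generator $x$ in degree $1$ and one generator $y$ in degree $2$, the degree-$3$ monomials are $x(xx)$ and $xy$, together with the generators $X_3$. The degree-$3$ paths number $5$; two of them are atoms, $(000111)$ and $(001011)$, and three are non-atoms. We compute the products: $(01)\insshuff(0011)=(01)(0011)+(0011)(01)+2(001011)$ (inserting $(01)$ into the two insertable gaps of $(0011)$), and $(01)\insshuff((01)\insshuff(01))=6(01)(01)(01)$. The decomposables in degree $3$ are thus $2$-dimensional, which forces $|X_3|=3$. The $3$ new generators must cover $(000111)$, the atom $(001011)$ modulo the product, and a non-atom such as $(01)(0011)-(0011)(01)$. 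Up to this point freeness is not yet contradicted, since the count is consistent. We push on to degree $4$, where the free algebra predicts $\dim=\#\{\text{degree-4 monomials}\}$ and this number depends only on $|X_1|,|X_2|,|X_3|,|X_4|$. Independently, we bound $|X_4|=\dim Q_4$ by exhibiting enough decomposables, and compare.

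The key step, and the expected obstacle, is finding a degree where a forced linear relation among monomials appears. The cleanest candidate is an explicit relation: an identity $\sum c_i m_i=0$ among distinct nonassociative monomials in generators, which is impossible in a free algebra. I would look for one in degree $4$ among the monomials in $x=(01)$ only and the degree-$3$ generators. For instance, I would check whether $x\insshuff(x\insshuff(x\insshuff x))$ and $(x\insshuff x)\insshuff(x\insshuff x)$ are proportional. Both equal a multiple of $(01)^4$, since every insertion into $(01)$ vanishes: they are $24(01)^4$ and $4\cdot 6(01)^4=24(01)^4$. Hence $x(x(xx))-(xx)(xx)=0$, a nontrivial relation between distinct commutative magmatic monomials in the generator $x$. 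Since $x=(01)$ is the unique path of degree $1$, it must belong to any homogeneous generating set up to scalar. This relation therefore contradicts freeness directly. The plan's real work is justifying that any free generating set can be taken homogeneous with $x\in X_1$. I would handle this by passing to the associated graded and using that a free algebra's minimal generators can be chosen homogeneous.
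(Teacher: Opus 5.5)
Your computation is correct, and it is exactly the relation the paper uses. In a commutative magma $x(x(xx))$ and $((xx)x)x$ are the same monomial, and both products equal $24(01)^4$ because $(01)$ has no insertable gaps.

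\textbf{The gap is the final step.} To get a contradiction you need $x=(01)$ to be, up to scalar, a member of a free generating set. You obtain this only by assuming that a free generating set of $\mathbb{KD}$ can be chosen homogeneous. You flag this yourself as the real work, but your sketch (``pass to the associated graded and use that a free algebra's minimal generators can be chosen homogeneous'') restates the claim rather than proving it, and it does not say which filtration is meant. The proposition concerns freeness as an abstract unital commutative non-associative algebra. So an isomorphism $\phi\colon\mathbb{KD}\to F(S)$ onto a free algebra need not respect path degree, and need not carry $\bigoplus_{n\ge1}\mathbb{K}D_n$ onto the ideal generated by $S$. A priori $\phi(x)$ is just some nonscalar element of $F(S)$. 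The homogeneity lemma is in fact true: translate the generators by scalars so that the augmentation ideals match, lift a homogeneous basis of the indecomposables, and compare lowest-order terms. But that argument has to be supplied.

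Separately, the abandoned dimension count contains a false claim, namely that $Q_n$ is a quotient of $\mathbb{K}A_n$. Your own degree-$3$ computation refutes it: $(01)(0011)-(0011)(01)$ is indecomposable and $\dim Q_3=3>2=|A_3|$. That part should be deleted.

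\textbf{How the paper closes it.} The paper avoids the issue with Shirshov's subalgebra theorem: every nonscalar element of a free unital commutative non-associative algebra freely generates the unital subalgebra it generates. Hence $\phi(x)$ cannot satisfy $((yy)y)y=(yy)(yy)$, whatever $\phi$ is.

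\textbf{An elementary alternative.} You can also prove what is needed by hand, which finishes your argument without any homogeneity lemma.
Grade $F(S)$ by the number of leaves of monomials. Let $y_d$ be the top nonzero component of $y=\phi(x)$; then $d\ge1$ because $y\notin\mathbb{K}1$. The component of degree $4d$ of $((yy)y)y-(yy)(yy)$ is $((y_dy_d)y_d)y_d-(y_dy_d)(y_dy_d)$.
Every monomial of degree $n\ge2$ is uniquely an unordered product of two monomials. So $F(S)_n$ splits according to the pair $\{i,n-i\}$ of degrees of the two factors. Multiplication identifies the $\{i,j\}$ piece with $F(S)_i\otimes F(S)_j$ when $i\ne j$, and with $\operatorname{Sym}^2F(S)_i$ when $i=j$. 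Hence a product of nonzero homogeneous elements is nonzero.
The two terms above are therefore nonzero elements of the distinct pieces $\{3d,d\}$ and $\{2d,2d\}$. Their difference is nonzero, contradicting the relation.
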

\begin{proof}
Let $w_r=(01)^r$ denote concatenation, with $w_0=\epsilon$. Since $(01)$ has no insertable gaps, only ordinary shuffles contribute, giving
\[
w_r\insshuff w_s=\binom{r+s}{r}w_{r+s}.
\]
In particular, for $x=w_1$ we have the relation
\begin{equation}\label{eq:fourth-power-relation}
((x\insshuff x)\insshuff x)\insshuff x
=(x\insshuff x)\insshuff(x\insshuff x)=24w_4.
\end{equation}
The equality between these two bracketings cannot hold for a nonscalar element of a free unital commutative non-associative algebra: by Shirshov's subalgebra theorem~\cite{Shirshov}, such an element freely generates a one-generator unital subalgebra. Since $x$ is nonscalar, the result follows.
\end{proof}

%%%%%%%%%%%%%%%%%%%%%%%%%%%-4-%%%%%%%%%%%%%%%%
\section{Cut-box Coproduct}\label{sec4}
In this section, we define the cut-box coproduct and show that $\mathbb{KD}$ is a graded coalgebra with this coproduct.

Define the \emph{cut-box coproduct} $\Delta$ on $\mathbb{KD}$ by 
$$\Delta(a)=\sum_{i=0}^{p}\alpha_{[i]}\otimes\alpha_{[p]\backslash[i]}=\sum_{i=0}^{p}\alpha_1\alpha_2\cdots\alpha_i\otimes\alpha_{i+1}\cdots\alpha_p,$$ 
where $a$ is a non-empty Dyck path of degree $n$ with decomposition $a=\alpha_1\alpha_2\cdots\alpha_p$. For $\epsilon$, $\Delta(\epsilon)=\epsilon\otimes\epsilon$.
The \emph{counit} $\nu\colon\mathbb{KD}\rightarrow\mathbb{K}$ is defined by
\begin{equation*}
		\nu(a)=
		\left\{
		\begin{aligned}
			1,\quad & a=\epsilon,\\
			0,\quad & a\neq\epsilon.
		\end{aligned}
	   \right.
\end{equation*}
\begin{example}
For the Dyck path  $a=010011001011$, its decomposition is $a=(01)(0011)(001011)$ and we have
\begin{equation*}
\begin{aligned}
	\Delta(a)
		&=
		\epsilon\otimes (01)(0011)(001011) +
		(01)\otimes(0011)(001011)\\
 &\ \ \ \ +
		(01)(0011)\otimes(001011)+
		(01)(0011)(001011)\otimes\epsilon.
\end{aligned}
\end{equation*}
\end{example} 

\begin{proposition}
$(\mathbb{KD},\Delta,\nu)$ is a graded coalgebra.
\end{proposition}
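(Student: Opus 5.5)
We check coassociativity, the counit property and compatibility with the grading directly on the basis $\mathbb{D}$, then extend linearly. Everything rests on one fact: the atom decomposition of a concatenation is the concatenation of the decompositions. If $a=\alpha_1\cdots\alpha_p$, the segments $\alpha_{[i]}$ and $\alpha_{[p]\setminus[i]}$ have atom decompositions $\alpha_1\cdots\alpha_i$ and $\alpha_{i+1}\cdots\alpha_p$. This holds because the splittable gaps of a segment are exactly the splittable gaps of $a$ lying in that range, shifted. The prefix $\alpha_1\cdots\alpha_i$ ends on the diagonal, so the 0/1 counts of $a$ and of the segment differ by equal amounts.

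\textbf{Coassociativity.} For $a=\epsilon$ both sides of Figure~\ref{fig2} give $\epsilon\otimes\epsilon\otimes\epsilon$. For $a=\alpha_1\cdots\alpha_p$ nonempty, apply $\Delta$ to each tensor factor, using the fact above to read off the decompositions. This gives
\[
(\Delta\otimes\mathrm{id})\Delta(a)=\sum_{0\le j\le i\le p}\alpha_{[j]}\otimes\alpha_{[j+1,i]}\otimes\alpha_{[p]\setminus[i]}
=(\mathrm{id}\otimes\Delta)\Delta(a).
\]
Both sides are indexed by pairs $0\le j\le i\le p$, with the convention that $\alpha_{[j+1,i]}=\epsilon$ when $j=i$. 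We must also treat empty factors correctly: when $i=0$, $\Delta(\epsilon)=\epsilon\otimes\epsilon$ contributes exactly the single term with $j=0$, so it fits the same indexing.

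\textbf{Counit.} We have $(\mathrm{id}\otimes\nu)\Delta(a)=\sum_i\alpha_{[i]}\,\nu(\alpha_{[p]\setminus[i]})$. Since $\nu$ kills nonempty paths, only $i=p$ survives, giving $a\otimes 1\cong a$. Symmetrically, $(\nu\otimes\mathrm{id})\Delta(a)$ keeps only $i=0$, and for $a=\epsilon$ the identity is immediate.

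\textbf{Grading.} If $a\in D_n$, then $\alpha_{[i]}\in D_k$ and $\alpha_{[p]\setminus[i]}\in D_{n-k}$, where $k$ is half the $i$-th splittable gap. Hence $\Delta(D_n)\subseteq\bigoplus_k \mathbb{K}D_k\otimes\mathbb{K}D_{n-k}$, and $\nu(D_n)=0$ for $n\ge1$ by definition.

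The only point needing care is the decomposition fact and the bookkeeping of empty factors; the rest is routine.
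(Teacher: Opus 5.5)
Your proof is correct and takes the same route as the paper. The paper also notes that both iterated coproducts sum over all ways of cutting the atom sequence into three consecutive, possibly empty, segments, and that the counit and grading conditions follow from the definitions. You additionally make explicit a fact the paper leaves implicit, namely that prefixes and suffixes of $a$ have the expected atom decompositions, while the paper also identifies $(\mathbb{KD},\Delta,\nu)$ as the familiar deconcatenation coalgebra dual to the free algebra on atoms.
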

\begin{proof}
The coalgebra $(\mathbb{KD}, \Delta, \nu)$ is well-known to be the graded dual of the free graded algebra generated by the atoms. This coalgebra structure arises in several Hopf algebras in the literature; see~\cite{2,8,Loday}, for example, and references therein.
In particular, $\Delta$ is coassociative: both iterated coproducts sum over all ways of cutting the atom sequence into three consecutive, possibly empty, segments. The counit identities and compatibility with the grading follow directly from the definitions.
\end{proof}
%%%%%%%%%%%%%%%%%%%%%%%%%%%-5-%%%%%%%%%%%%%%%%
\section{A Non-associative Hopf Algebra on Dyck Paths}\label{sec5}
In this section, we prove that the shuffle-insertion product and the cut-box coproduct are compatible. Thus $(\mathbb{KD},\insshuff ,\mu,\Delta,\nu)$ is a non-associative graded bialgebra. As it is graded and connected, it is a non-associative Hopf algebra. 

\begin{theorem}\label{theorem3}
$(\mathbb{KD},\insshuff ,\mu,\Delta,\nu)$ is a non-associative graded bialgebra.
\end{theorem}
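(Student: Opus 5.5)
Since Theorem~\ref{theorem1} and the preceding proposition already give the graded non-associative algebra and the graded coalgebra, the plan is to verify compatibility condition (i): $\nu$ and $\Delta$ are algebra maps. For $\nu$ this is immediate. A term of $a\insshuff b$ with $a,b\neq\epsilon$ has positive degree, so $\nu(a\insshuff b)=0=\nu(a)\nu(b)$; the unit cases hold because $\epsilon$ is the unit. We also have $\Delta(\epsilon)=\epsilon\otimes\epsilon$. The real content is therefore
\[
\Delta(a\insshuff b)=\sum_{i=0}^{p}\sum_{j=0}^{q}\big(\alpha_{[i]}\insshuff \beta_{[j]}\big)\otimes\big(\alpha_{[p]\setminus[i]}\insshuff \beta_{[q]\setminus[j]}\big)
\]
for $a=\alpha_1\cdots\alpha_p$ and $b=\beta_1\cdots\beta_q$. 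By bilinearity it suffices to check this on basis paths, with both sides understood in the colored (multiplicity-tracking) form.

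The key step is a bijection between the terms of the two sides. Consider a term on the left. It is a choice of shuffle $w\in\operatorname{Sh}(a,b)$, a collection of disjoint insertion blocks, and one term of each insertion. After the replacements, the resulting path is a sequence of atoms. By Lemma~\ref{lem:insainb} and Remark~\ref{rem:atominins}, each of these atoms is either an original $\alpha_i$ or $\beta_j$, or an atom from a single insertion block. Hence every splittable gap of the term sits between two consecutive units of $w$ (original atoms or blocks), and it never cuts through a block. A cut at such a gap therefore splits $w$ into a prefix and a suffix. The prefix is a shuffle of some $\alpha_{[i]}$ and $\beta_{[j]}$, and the suffix is a shuffle of the complementary segments. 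Each chosen block lies entirely on one side, so the left factor is a term of $\tau$ applied to the prefix shuffle, and the right factor is a term of $\tau$ applied to the suffix shuffle.

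Conversely, take a term of $(\alpha_{[i]}\insshuff\beta_{[j]})\otimes(\alpha_{[p]\setminus[i]}\insshuff\beta_{[q]\setminus[j]})$. Concatenating the two underlying shuffles gives a shuffle $w$ of $a$ and $b$. Taking the union of the two block collections gives a collection of disjoint insertion blocks of $w$: blocks remain consecutive, and none straddles the junction. Concatenating the two resulting paths gives a term of $\tau(w)$, together with a distinguished splittable gap at the junction, which is a cut in its coproduct.

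I expect the main obstacle to be showing that these two maps are mutually inverse with the correct multiplicities. The delicate point is that blocks never cross the cut. On the left side this holds because each inserted result is a single atom (Lemma~\ref{lem:insainb}), so no splittable gap lies inside a block. On the right side it holds by construction. One must also check two further things. First, the cut position determines $(i,j)$ uniquely, since the atoms in the prefix remember their origin and index. Second, the empty prefix and the full prefix, corresponding to $\epsilon\otimes(a\insshuff b)$ and $(a\insshuff b)\otimes\epsilon$, match the cases $i=j=0$ and $i=p$, $j=q$. I would handle this by carrying colors and indices throughout, so that the comparison is a bijection of colored terms, and forget colors only at the end. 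Finally, both sides are graded by total degree, so the result is a graded bialgebra.
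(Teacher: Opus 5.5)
Your proposal is correct and takes essentially the same route as the paper. The paper also checks the unit and counit conditions directly, then proves $\Delta(a\insshuff b)=\Delta(a)\insshuff\Delta(b)$ by a multiplicity-preserving bijection on colored contributions: it groups terms by the numbers $(s,t)$ of atoms of $a$ and $b$ in the left factor, uses that each insertion block becomes a single atom so no cut can split a block, and inverts by concatenating shuffles and taking the union of block collections, exactly as you do.
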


\begin{proof}
We only need to verify that the following conditions hold for any Dyck paths $a = \alpha_1 \alpha_2 \cdots \alpha_p$ and $b = \beta_1 \beta_2 \cdots \beta_q$:\par
    (i)\quad$\nu\circ\mu=\mathrm{id}_{\mathbb{K}}$,\par
    (ii)\ $~~\nu(a\insshuff b)=\nu(a)\nu(b)$,\par
	(iii)\ $~\Delta(\mu(1))=\mu(1)\otimes\mu(1)$,\par 
	(iv)\ $~\Delta(a\insshuff b)=\sum(\alpha_{[i]}\insshuff \beta_{[j]})\otimes(\alpha_{[p]\backslash[i]}\insshuff \beta_{[q]\backslash[j]})$.\par

It is easy to verify that conditions (i)--(iii) hold.  
Condition (iv) is equivalent to  
\[
\Delta(a \insshuff b) = \Delta(a) \insshuff \Delta(b),
\]
that is, the cut-box coproduct is an algebra homomorphism for the componentwise product on $\mathbb{KD}\otimes\mathbb{KD}$.

As we saw in Remark~\ref{rem:atominins}, any atom appearing in a term of $a \insshuff b$ is composed of atoms from $a$ and $b$, and they appear in the same relative order. Hence, the cut-box coproduct will contain initial segments of atoms from $a$ and $b$ in the left tensor factors, and the corresponding complementary segments in the right tensor factors. 

We now restrict our attention to those tensors in which the left factor contains exactly the first $s$ atoms of $a$ and the first $t$ atoms of $b$, and we let  
$\Delta_{s,t}(a \insshuff b)$ denote the corresponding terms in $\Delta(a \insshuff b)$.
Work with colored contributions, retaining the chosen insertion blocks and their gap choices. A cut between output atoms cannot split an insertion block, since for each fixed choice of insertion gaps, every selected block becomes one atom. It therefore separates a contribution into two independent contributions: one to the product of the prefixes $\alpha_{[s]}$ and $\beta_{[t]}$, and one to the product of their complementary suffixes. Conversely, choose a contribution to the product of the prefixes and a contribution to the product of the complementary suffixes, retaining their colored shuffles, insertion blocks, and gap choices. Concatenating these two colored shuffles and taking the union of their selected insertion blocks gives a unique contribution to the original product, with a distinguished cut between the two parts. This correspondence is reversible and preserves all insertion choices, proving the following identity with multiplicities:
\[
	\Delta_{s,t}(a\insshuff b)=(\alpha_{[s]}\insshuff \beta_{[t]})\otimes(\alpha_{[p]\backslash[s]}\insshuff \beta_{[q]\backslash[t]}).
\]
We then compute
\begin{equation*}
\begin{aligned}
 \Delta(a\insshuff b) &= \sum_{s=0}^p\sum_{t=0}^q  \Delta_{s,t}(a\insshuff b)\\
	&=  \sum_{s=0}^p\sum_{t=0}^q (\alpha_{[s]}\insshuff \beta_{[t]})\otimes(\alpha_{[p]\backslash[s]}\insshuff \beta_{[q]\backslash[t]}) \\
	&=\left(\sum_{s=0}^{p}\alpha_{[s]}\otimes \alpha_{[p]\backslash[s]}\right)\insshuff \left(\sum_{t=0}^{q}\beta_{[t]}\otimes \beta_{[q]\backslash[t]}\right)\\
	&=\Delta(a)\insshuff \Delta(b).
\end{aligned}
\end{equation*}
The third equality uses the standard componentwise extension of the product to $\mathbb{KD}\otimes\mathbb{KD}$, defined by
\[
(u\otimes v)\insshuff(u'\otimes v')=(u\insshuff u')\otimes(v\insshuff v')
\]
and extended bilinearly.
\end{proof}

\begin{example}
For Dyck paths $a=(01)(01)$, ${\color{red}b}={\color{red}(0(01)1)}$, we have
\begin{align*}
\Delta(&a\insshuff {\color{red}b})\\
&=\Delta\big((01)(01)\insshuff {\color{red}(0(01)1)}\big)\\ 
&=\Delta\Big((01)(01){\color{red}(0(01)1)}+(01){\color{red}(0}(01){\color{red}(01)1)}+(01){\color{red}(0(01)}(01){\color{red}1)}\\
&\ \ \ \ \ \ \ \ \ \ +(01){\color{red}(0(01)1)}(01)+{\color{red}(0}(01)(01){\color{red}(01)1)}+{\color{red}(0}(01){\color{red}(01)}(01){\color{red}1)}\\
&\ \ \ \ \ \ \ \ \ \ +{\color{red}(0}(01){\color{red}(01)1)}(01)+{\color{red}(0(01)}(01)(01){\color{red}1)}+{\color{red}(0(01)}(01){\color{red}1)}(01)\\
&\ \ \ \ \ \ \ \ \ \ +{\color{red}(0(01)1)}(01)(01)\Big)\\
&=\epsilon\otimes\Big((01)(01){\color{red}(0(01)1)}+(01){\color{red}(0}(01){\color{red}(01)1)}+(01){\color{red}(0(01)}(01){\color{red}1)}\\
&\ \ \ \ \ \ \ \ \ \ \ \ \ \  +(01){\color{red}(0(01)1)}(01)+{\color{red}(0}(01)(01){\color{red}(01)1)}+{\color{red}(0}(01){\color{red}(01)}(01){\color{red}1)}\\
&\ \ \ \ \ \ \ \ \ \ \ \ \ \ +{\color{red}(0}(01){\color{red}(01)1)}(01)+{\color{red}(0(01)}(01)(01){\color{red}1)}+{\color{red}(0(01)}(01){\color{red}1)}(01)\\
&\ \ \ \ \ \ \ \ \ \ \ \ \ \ +{\color{red}(0(01)1)}(01)(01)\Big)\\ 
&\ \ \ \ +(01)\otimes\Big((01){\color{red}(0(01)1)}+{\color{red}(0}(01){\color{red}(01)1)}+{\color{red}(0(01)}(01){\color{red}1)}+{\color{red}(0(01)1)}(01)\Big)\\
&\ \ \ \ +(01)(01)\otimes{\color{red}(0(01)1)}+{\color{red}(0(01)1)}\otimes(01)(01)\\
&\ \ \ \ +\Big((01){\color{red}(0(01)1)}+{\color{red}(0}(01){\color{red}(01)1)}+{\color{red}(0(01)}(01){\color{red}1)}+{\color{red}(0(01)1)}(01)\Big)\otimes(01)\\
&\ \ \ \ 
+\Big((01)(01){\color{red}(0(01)1)}+(01){\color{red}(0}(01){\color{red}(01)1)}+(01){\color{red}(0(01)}(01){\color{red}1)}\\
&\ \ \ \ \ \ \ \ +(01){\color{red}(0(01)1)}(01)+{\color{red}(0}(01)(01){\color{red}(01)1)}+{\color{red}(0}(01){\color{red}(01)}(01){\color{red}1)}\\
&\ \ \ \ \ \ \ \ +{\color{red}(0}(01){\color{red}(01)1)}(01)+{\color{red}(0(01)}(01)(01){\color{red}1)}+{\color{red}(0(01)}(01){\color{red}1)}(01)\\
&\ \ \ \ \ \ \ \ +{\color{red}(0(01)1)}(01)(01)\Big)\otimes\epsilon\\
	&=\big((\epsilon\otimes(01)(01)+(01)\otimes(01)+(01)(01)\otimes\epsilon)\big)\insshuff \big(\epsilon\otimes{\color{red}(0(01)1)}+{\color{red}(0(01)1)}\otimes\epsilon\big)\\
	&=\Delta((01)(01))\insshuff \Delta({\color{red}(0(01)1)})\\
	&=\Delta(a)\insshuff \Delta({\color{red}b}).\\
	\end{align*}
\end{example}

\begin{corollary}
$(\mathbb{KD},\insshuff ,\mu,\Delta,\nu)$ is a non-associative Hopf algebra.
\end{corollary}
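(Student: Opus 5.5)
The plan is to deduce the corollary from Theorem~\ref{theorem3} together with the remark in Section~\ref{sec2}: a graded connected non-associative bialgebra admits unique left and right antipodes. Theorem~\ref{theorem3} already makes $(\mathbb{KD},\insshuff,\mu,\Delta,\nu)$ a non-associative graded bialgebra. For connectedness, I will note that $D_0=\{\epsilon\}$, so $\mathbb{K}D_0=\mathbb{K}\epsilon=\mu(\mathbb{K})$. Hence it only remains to exhibit $\theta_l$ and $\theta_r$. Since the paper only says this is ``not hard to show'', I will write out the degree-by-degree construction explicitly rather than cite it.

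First I will record a structural fact about $\Delta$. For a nonempty path $a=\alpha_1\cdots\alpha_p$ of degree $n$,
\[
\Delta(a)=\epsilon\otimes a+a\otimes\epsilon+\sum_{i=1}^{p-1}\alpha_{[i]}\otimes\alpha_{[p]\setminus[i]},
\]
and every middle term has both factors of degree strictly less than $n$. This follows because each atom has positive degree.

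Next I will define the antipodes recursively on the basis. Set $\theta_l(\epsilon)=\theta_r(\epsilon)=\epsilon$. For a nonempty path $a$, define
\[
\theta_l(a)=-a-\sum_{i=1}^{p-1}\theta_l(\alpha_{[i]})\insshuff\alpha_{[p]\setminus[i]},\qquad
\theta_r(a)=-a-\sum_{i=1}^{p-1}\alpha_{[i]}\insshuff\theta_r(\alpha_{[p]\setminus[i]}),
\]
and extend both maps linearly. By induction on degree, these are well defined, since the arguments on the right have smaller degree, and they preserve degree, since $\insshuff$ is graded by Theorem~\ref{theorem1}.

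Using $\epsilon\insshuff x=x\insshuff\epsilon=x$, the left-antipode identity on a nonempty $a$ expands as
\[
m\circ(\theta_l\otimes\mathrm{id})\circ\Delta(a)=\theta_l(a)+a+\sum_{i=1}^{p-1}\theta_l(\alpha_{[i]})\insshuff\alpha_{[p]\setminus[i]}=0=\mu\nu(a).
\]
The identity on $\epsilon$ is immediate, and the right-antipode identity is checked symmetrically.

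No associativity is used at any point; each identity involves only a single product, so this step needs no care beyond that observation. For uniqueness, I will show that any left antipode must satisfy the same recursion, because the displayed identity can be solved for $\theta_l(a)$; the same holds for $\theta_r$. The main obstacle is only bookkeeping: one must make sure the extreme terms of $\Delta(a)$ are exactly $\epsilon\otimes a$ and $a\otimes\epsilon$, which is what makes the recursion solvable.
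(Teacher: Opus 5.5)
Your proposal is correct and follows the paper's route. The paper deduces the corollary from Theorem~\ref{theorem3} together with the remark in Section~\ref{sec2} that a graded connected non-associative bialgebra has unique left and right antipodes; your recursive definitions of $\theta_l$ and $\theta_r$ simply carry out that ``degree by degree'' argument explicitly, using only $\Delta(a)=\epsilon\otimes a+a\otimes\epsilon+(\text{lower-degree terms})$ and the unit property, with no associativity needed (this recursion is also the one the remark after Proposition~\ref{antipode} says expands to \eqref{lantipode} and \eqref{rantipode}).
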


\begin{proof}
From Theorem \ref{theorem3}, $(\mathbb{KD},\insshuff ,\mu,\Delta,\nu)$ is a non-associative bialgebra. Since $(\mathbb{KD},\insshuff ,\mu,\Delta,\nu)$ is graded and connected, it is a non-associative Hopf algebra.
\end{proof}

%%%%%%%%%%%%%%%%%%%%%-6-%%%%%%%%%%%%%%%
\section{Antipodes}\label{sec6}
In this section, we provide closed-form formulas for the left and right antipodes of the non-associative Hopf algebra $(\mathbb{KD}, \insshuff, \mu, \Delta, \nu)$. Takeuchi's Formula \cite[Lemma 14]{Takeuchi} for the antipode was given in a more general setting than graded connected bialgebras. For a more precise formulation in our context, see \cite[Theorem 1.1]{sagan-Benedetti}.

For any positive integer $n$, a sequence $(i_1, \ldots, i_k)$ of positive integers such that $i_1 + i_2 + \cdots + i_k = n$ is called a \emph{composition} of $n$. We denote this by $(i_1, i_2, \ldots, i_k) \models n$. For example, $(2,1) \models 3$ and $(1,2,1) \models 4$.

As $\insshuff$ is nonassociative, we adopt the following notation for Dyck paths $a$, $b$, $c$ to distinguish the two parenthesizations:
\begin{align*}
(a \insshuff b \insshuff c)_l &= (a \insshuff b) \insshuff c,\\
(a \insshuff b \insshuff c)_r &= a \insshuff (b \insshuff c).
\end{align*}
We extend these notations to denote the \emph{left} (respectively, \emph{right}) \emph{bracketing} of shuffle-insertion products involving more than three Dyck paths. Obviously, the left and right bracketings have the following properties:
\begin{align*}
(a \insshuff b \insshuff\cdots\insshuff c)_l\insshuff d&=(a \insshuff b \insshuff \cdots\insshuff c\insshuff d)_l,\\
a \insshuff (b \insshuff\cdots\insshuff c\insshuff d)_r&=(a \insshuff b\insshuff\cdots \insshuff c\insshuff d)_r.
\end{align*}

\begin{proposition}\label{antipode}
The left and right antipodes $\theta_l$, $\theta_r$ of the non-associative bialgebra $(\mathbb{KD}, \insshuff, \mu, \Delta, \nu)$ are given by
\begin{equation}\label{lantipode}
  \theta_l(a)=\sum\limits_{k=1}^p\sum\limits_{(i_1,i_2,\ldots,i_k)\models p}(-1)^{k}(\alpha_{[1,i_1]}\insshuff \alpha_{[i_1+1,i_1+i_2]}\insshuff \cdots\insshuff \alpha_{[i_1+\cdots+i_{k-1}+1,p]})_l
\end{equation}
and 
\begin{equation}\label{rantipode}
  \theta_r(a)=\sum\limits_{k=1}^p\sum\limits_{(i_1,i_2,\ldots,i_k)\models p}(-1)^{k}(\alpha_{[1,i_1]}\insshuff \alpha_{[i_1+1,i_1+i_2]}\insshuff \cdots\insshuff \alpha_{[i_1+\cdots+i_{k-1}+1,p]})_r,
\end{equation}
where $a=\alpha_1\alpha_2\cdots\alpha_p$ is a Dyck path. In particular, $\theta_l(\epsilon)=\epsilon$ and $\theta_r(\epsilon)=\epsilon$.
\end{proposition}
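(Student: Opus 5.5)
We prove both formulas by induction on the number $p$ of atoms of $a$. We use the recursive characterization of $\theta_l$ and $\theta_r$ coming from the defining identities together with the cut-box coproduct. The two cases are mirror images, so we describe $\theta_l$ in detail and indicate the changes for $\theta_r$.

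For $a=\epsilon$ we have $\Delta(\epsilon)=\epsilon\otimes\epsilon$ and $\mu\circ\nu(\epsilon)=\epsilon$, which forces $\theta_l(\epsilon)=\theta_r(\epsilon)=\epsilon$. For $p\ge1$, $\nu(a)=0$, and the identity $m\circ(\theta_l\otimes\mathrm{id})\circ\Delta(a)=0$ reads
\[
\theta_l(a)+\sum_{i=0}^{p-1}\theta_l(\alpha_{[i]})\insshuff\alpha_{[i+1,p]}=0 ,
\]
after isolating the $i=p$ term and using $\theta_l(\alpha_{[p]})\insshuff\epsilon=\theta_l(a)$. Since each $\alpha_{[i]}$ with $i<p$ has fewer atoms, this recursion determines $\theta_l$ uniquely. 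It is the degree-by-degree uniqueness noted in Section~\ref{sec2}, valid because the bialgebra is graded and connected. It therefore suffices to show that the right-hand side of \eqref{lantipode} satisfies the recursion, i.e.
\[
\theta_l(a)=-\sum_{i=0}^{p-1}\theta_l(\alpha_{[i]})\insshuff\alpha_{[i+1,p]}.
\]

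The term $i=0$ gives $-\epsilon\insshuff a=-a$. This matches the composition $(p)$ with $k=1$ in \eqref{lantipode}. For $1\le i\le p-1$, we substitute the inductive formula for $\theta_l(\alpha_{[i]})$, a sum over compositions $(i_1,\dots,i_k)\models i$. Multiplying a left bracketing on the right by $\alpha_{[i+1,p]}$ gives
\[
(\alpha_{[1,i_1]}\insshuff\cdots\insshuff\alpha_{[\cdot,i]})_l\insshuff\alpha_{[i+1,p]}
=(\alpha_{[1,i_1]}\insshuff\cdots\insshuff\alpha_{[\cdot,i]}\insshuff\alpha_{[i+1,p]})_l,
\]
by the stated property of left bracketings. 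This term carries sign $-(-1)^k=(-1)^{k+1}$. The map $((i_1,\dots,i_k),i)\mapsto(i_1,\dots,i_k,p-i)$ is a bijection from pairs (composition of $i$, $1\le i\le p-1$) onto compositions of $p$ with at least two parts. Together with the $i=0$ term, this exhausts all compositions of $p$ with the correct signs, so the induction closes.

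For $\theta_r$ we use $m\circ(\mathrm{id}\otimes\theta_r)\circ\Delta(a)=0$, i.e. $\theta_r(a)=-\sum_{i=1}^{p}\alpha_{[i]}\insshuff\theta_r(\alpha_{[i+1,p]})$. Here we use the right-bracketing property $\alpha_{[i]}\insshuff(\cdots)_r=(\alpha_{[i]}\insshuff\cdots)_r$ and prepend the first part $i$ to a composition of $p-i$. The term $i=p$ is $a\insshuff\epsilon=a$, which gives the sign $-1$ for $k=1$. The main point needing care is that the non-associativity never interferes. The recursion for $\theta_l$ only ever multiplies an already left-bracketed expression on the right by a single path, and the recursion for $\theta_r$ only multiplies a right-bracketed expression on the left, so the bracketing properties apply verbatim and no reassociation is required. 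We should also check that, for $\theta_r$, the segments $\alpha_{[i+1,p]}$ are Dyck paths whose atoms are $\alpha_{i+1},\dots,\alpha_p$, so the induction hypothesis applies with the relabeled atoms.
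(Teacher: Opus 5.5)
Your proof is correct and is essentially the paper's argument. The paper checks directly that the candidate maps satisfy the antipode identity: it substitutes the formula into $\sum_x \theta_l(\alpha_{[x]})\insshuff\alpha_{[x+1,p]}$, applies the left-bracketing identity, and cancels terms via the same bijection between compositions of $p$ with last part $y$ and compositions of $p-y$. Your induction on $p$ is that same verification phrased through the recursion, with uniqueness of the one-sided antipodes used in the same way.
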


\begin{proof}
Regard the right-hand sides of \eqref{lantipode} and \eqref{rantipode} as definitions of candidate maps $\theta_l$ and $\theta_r$. We verify their respective antipode identities using the definitions of left and right bracketing given above. For the left antipode, we need to check that
\begin{equation*}
\insshuff \circ(\theta_l\otimes \text{id})\circ\Delta=\mu\circ\nu.
\end{equation*}
When $a=\epsilon$, we have \ $\insshuff \circ(\theta_l\otimes \text{id})\circ\Delta(\epsilon)=\epsilon=\mu\circ\nu(\epsilon).$ \ 
When $a$ is non-empty, $\mu\circ\nu(a)=0$, and we need to show that 
\begin{equation*}
\insshuff \circ(\theta_l\otimes \text{id})\circ\Delta(a)=0.
\end{equation*}
From the definitions of $\Delta$ and $\theta_l$, we have 
\begin{equation}\label{8}
 \insshuff \circ (\theta_l\otimes \text{id})\circ\Delta(a)=\sum\limits_{x=0}^p\theta_l(\alpha_{[x]})\insshuff \alpha_{[x+1,p]}.
\end{equation}
The $x$-th term $C_x$ in the sum \eqref{8} satisfies $C_0=a$. For $1\le x\le p$, substituting the candidate formula \eqref{lantipode} for the prefix $\alpha_{[x]}$, which has $x$ atoms, gives the second equality below. The third equality follows from bilinearity and the left-bracketing identity:
\begin{equation*}
\begin{aligned}
  C_x&=\theta_l(\alpha_{[x]})\insshuff \alpha_{[x+1,p]}\\
  &= \left(\sum\limits_{t=1}^x\sum\limits_{(i_1,i_2,\ldots,i_t)\models x}(-1)^{t}(\alpha_{[1,i_1]}\insshuff \alpha_{[i_1+1,i_1+i_2]}\insshuff \cdots\insshuff \alpha_{[i_1+\cdots+i_{t-1}+1,x]})_l\right)\insshuff \alpha_{[x+1,p]}\\
  &= \sum\limits_{t=1}^x\sum\limits_{(i_1,i_2,\ldots,i_t)\models x}(-1)^{t}(\alpha_{[1,i_1]}\insshuff \alpha_{[i_1+1,i_1+i_2]}\insshuff \cdots\insshuff \alpha_{[i_1+\cdots+i_{t-1}+1,x]} \insshuff \alpha_{[x+1,p]})_l.
\end{aligned}
\end{equation*}
Denote the inner sum above by $C_{x,t}$. That is,
\begin{equation*}
C_{x,t}=\sum_{(i_1,i_2,\ldots,i_t)\models x}(-1)^{t}(\alpha_{[1,i_1]}\insshuff \alpha_{[i_1+1,i_1+i_2]}\insshuff \cdots\insshuff \alpha_{[i_1+\cdots+i_{t-1}+1,x]} \insshuff \alpha_{[x+1,p]})_l.
\end{equation*}
We then have
\begin{equation*}
\begin{aligned}
 \insshuff \circ (\theta_l\otimes \text{id})\circ\Delta(a)
 &=\sum\limits_{x=0}^p\theta_l(\alpha_{[x]})\insshuff \alpha_{[x+1,p]}\\
 &=C_0 + \sum_{x=1}^p \sum_{t=1}^x C_{x,t}\\
 &=(C_0+C_{p,1}) + \sum_{t=1}^{p-1}\left( C_{p,t+1}+ \sum_{x=t}^{p-1} C_{x,t}\right).
\end{aligned}
\end{equation*}
Since $C_0+C_{p,1}=a -a = 0$, we only need to show that 
\[
C_{p,t+1}=-C_{p-1,t}-C_{p-2,t}\cdots-C_{t,t}
\]
to conclude the proof:
\begin{align*}
C_{p,t+1}
&=\sum_{(i_1,i_2,\ldots,i_{t+1})\models p}(-1)^{t+1}(\alpha_{[1,i_1]}\insshuff \alpha_{[i_1+1,i_1+i_2]}\insshuff \cdots\\
&\hspace{3.5cm}\insshuff\alpha_{[i_1+\cdots+i_{t-1}+1,i_1+\cdots+i_{t}]}\insshuff \alpha_{[i_1+\cdots+i_{t}+1,p]} \insshuff \epsilon)_l\\
&= -\sum_{y=1}^{p-t}\sum_{(i_1,i_2,\ldots,i_{t},y)\models p}(-1)^{t}(\alpha_{[1,i_1]}\insshuff \alpha_{[i_1+1,i_1+i_2]}\insshuff \cdots\\
&\hspace{4cm}\insshuff \alpha_{[i_1+\cdots+i_{t-1}+1,p-y]} \insshuff \alpha_{[i_1+\cdots+i_{t}+1, p]} )_l\\
&= -\sum_{y=1}^{p-t}\sum_{(i_1,i_2,\ldots,i_{t})\models p-y}(-1)^{t}(\alpha_{[1,i_1]}\insshuff \alpha_{[i_1+1,i_1+i_2]}\insshuff \cdots\\
&\hspace{4cm}\insshuff \alpha_{[i_1+\cdots+i_{t-1}+1,p-y]} \insshuff \alpha_{[i_1+\cdots+i_{t}+1,p]} )_l\\
&= -\sum_{y=1}^{p-t} C_{p-y,t},
\end{align*}
as needed.  
Therefore, \eqref{lantipode} is indeed a formula for the left antipode $\theta_l$. The proof of \eqref{rantipode} is similar, working on the other side and using the property of right bracketing.
\end{proof}

\begin{remark}
Although Proposition~\ref{antipode} gives closed-form formulas for the left and right antipodes, these formulas are not cancellation-free.
\end{remark}

\begin{example}
Let $x=01$, $y=0011$, and $z=001011$. We have
\[
x\insshuff y=xy+yx+2z,
\]
where the coefficient $2$ arises because inserting $x$ into either of the two insertable gaps of $y$ produces $z$. Since $xy$ has two atoms, both antipode formulas give
\begin{align*}
\theta_l(xy)=\theta_r(xy)
&=-xy+x\insshuff y\\
&=-xy+(xy+yx+2z)\\
&=yx+2z.
\end{align*}
The negative contribution $-xy$ from the composition $(2)$ cancels with the positive contribution $xy$ from the composition $(1,1)$. Thus the formulas in Proposition~\ref{antipode} are not cancellation-free.
\end{example}

\begin{corollary}
The left and right antipodes of the Hopf algebra $(\mathbb{KD}, \insshuff, \mu, \Delta, \nu)$ are not equal.
\end{corollary}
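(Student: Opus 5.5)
Take a path with three atoms, $a=\alpha_1\alpha_2\alpha_3$, and compare the formulas of Proposition~\ref{antipode}. For $p=3$ the compositions are $(3)$, $(2,1)$, $(1,2)$ and $(1,1,1)$. Only the last has three parts, so only there can the left and right bracketings differ. Hence
\[
\theta_l(a)-\theta_r(a)=-\big((\alpha_1\insshuff\alpha_2)\insshuff\alpha_3-\alpha_1\insshuff(\alpha_2\insshuff\alpha_3)\big).
\]
So it suffices to exhibit three atoms $\alpha_1,\alpha_2,\alpha_3$ whose associator is nonzero.

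The natural choice is the triple from the proof of Theorem~\ref{theorem1}: $\alpha=(0(01)(01)1)$, $\beta=(01)$, $\gamma=(0(01)1)$. That proof already shows that the coefficient of the atom $w=(0\delta(01)1)$ is $3$ in $(\alpha\insshuff\beta)\insshuff\gamma$ and $0$ in $\alpha\insshuff(\beta\insshuff\gamma)$. Taking $a=\alpha\beta\gamma$, we must check that its atom decomposition really is $\alpha_1=\alpha$, $\alpha_2=\beta$, $\alpha_3=\gamma$. This holds because each is an atom and concatenation creates new returns only at the junctions.

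It then follows that the coefficient of $w$ in $\theta_l(a)-\theta_r(a)$ is $-3\neq 0$, so $\theta_l\neq\theta_r$. The only point needing care is the reduction itself: one must verify that the compositions with at most two parts give identical terms in \eqref{lantipode} and \eqref{rantipode}. For two factors the left and right bracketings are both the plain product $u\insshuff v$, so this is immediate. I expect no real obstacle, since the associator computation is already done in Theorem~\ref{theorem1}.
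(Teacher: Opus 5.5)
Your proposal is correct and is essentially the paper's own argument: expand both antipode formulas for a three-atom path, observe that they differ only in the bracketing of $\alpha_1\insshuff\alpha_2\insshuff\alpha_3$, and invoke the non-associative triple $(0(01)(01)1)$, $(01)$, $(0(01)1)$ from Theorem~\ref{theorem1}. Your explicit coefficient $-3$ for $w$ in $\theta_l(a)-\theta_r(a)$ and your check that $a$ decomposes into exactly these three atoms simply make the paper's appeal to Theorem~\ref{theorem1} more concrete.
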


\begin{proof} 
For $a=\alpha_1\alpha_2\alpha_3$, 
\begin{equation*}
  \theta_l(a)=-\alpha_1\alpha_2\alpha_3+\alpha_1\insshuff \alpha_2\alpha_3+\alpha_1\alpha_2\insshuff \alpha_3-(\alpha_1\insshuff \alpha_2\insshuff \alpha_3)_l,
\end{equation*}
\begin{equation*}
  \theta_r(a)=-\alpha_1\alpha_2\alpha_3+\alpha_1\insshuff \alpha_2\alpha_3+\alpha_1\alpha_2\insshuff \alpha_3-(\alpha_1\insshuff \alpha_2\insshuff \alpha_3)_r.
\end{equation*}
If $\alpha_1=(0(01)(01)1)$, $\alpha_2=(01)$ and $\alpha_3=(0(01)1)$, then 
\[
((0(01)(01)1)\insshuff (01)\insshuff (0(01)1))_l\neq ((0(01)(01)1)\insshuff (01)\insshuff (0(01)1))_r
\] 
from the proof of Theorem~\ref{theorem1}. So 
\[
\theta_l((0(01)(01)1)(01)(0(01)1))\neq \theta_r((0(01)(01)1)(01)(0(01)1)).
\] 
Hence, $\theta_l\neq\theta_r$.
\end{proof}

\begin{remark}\label{rem:equal-antipodes}
Non-associativity alone does not force the left and right antipodes to be distinct. Indeed, suppose a graded connected non-associative bialgebra $H$ is both commutative and cocommutative. For every linear map $T\colon H\to H$, the commutativity and cocommutativity give
\[
m\circ(T\otimes\mathrm{id})\circ\Delta
=m\circ(\mathrm{id}\otimes T)\circ\Delta.
\]
Hence a left antipode also satisfies the right antipode identity. By uniqueness, $\theta_l=\theta_r$.

For an explicit example, consider the commutative analogue of the co-addition construction of Gerritzen and Holtkamp~\cite[Section~1]{10}. Let $A$ be the free unital commutative non-associative algebra on one generator $x$. Define $\Delta_A$ as the unital algebra homomorphism determined by
\[
\Delta_A(x)=x\otimes1+1\otimes x,
\]
and take the counit to be $1$ on the unit and zero in positive degrees. The coalgebra axioms follow from freeness by checking the corresponding algebra homomorphisms on $x$; the same argument gives cocommutativity. Grading by the number of occurrences of $x$ makes $A$ a graded connected bialgebra. Its product is non-associative, since
\[
((xx)x)x\neq(xx)(xx)
\]
are distinct bracketed monomials in $A$. Nevertheless, its left and right antipodes coincide by the preceding argument.
\end{remark}

\begin{remark}
The formulas in Proposition~\ref{antipode} can also be obtained by repeatedly expanding the antipode recursions. For the left antipode, each step cuts off a nonempty final segment, producing the left bracketing. For the right antipode, each step cuts off a nonempty initial segment, producing the right bracketing. The resulting decompositions are precisely the compositions of the number of atoms, with a decomposition into $k$ segments contributing the sign $(-1)^k$.
\end{remark}

%%%%%%%%%%%%%%%%%%%%%%%%%%%-7-%%%%%%%%%%%%%%%%
\section{Bigrading and Primitive Elements}\label{sec:structure}
We now examine two structural properties of the shuffle-insertion Hopf algebra $H=\mathbb{KD}$: its bigrading by path degree and peak number, and its primitive subspace. We also relate primitive associators to the difference between the left and right antipodes.

\begin{proposition}\label{prop:bigrading}
Let
\[
H_{n,k}=\operatorname{span}_{\mathbb{K}}\{a\in D_n:\operatorname{pk}(a)=k\}.
\]
Then $H=\mathbb{KD}=\bigoplus_{n,k\ge0}H_{n,k}$ is a bigraded non-associative Hopf algebra, with
\begin{align*}
H_{n,k}\insshuff H_{m,l}&\subseteq H_{n+m,k+l},\\
\Delta(H_{n,k})&\subseteq\bigoplus_{\substack{i+j=n\\r+s=k}}H_{i,r}\otimes H_{j,s}.
\end{align*}
Both left and right antipodes preserve each $H_{n,k}$.
\end{proposition}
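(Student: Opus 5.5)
The plan is to reduce everything to one invariant: peak number is additive under concatenation of atoms, and it is unchanged by every insertion operation. The first step is concatenation. A peak is an occurrence of $01$. At a splittable gap the letter on the left is the final $1$ of an atom and the letter on the right is the initial $0$ of the next atom, so the pair there is $10$ and no peak straddles a junction. Hence for $a=\alpha_1\cdots\alpha_p$ we have $\operatorname{pk}(a)=\sum_i\operatorname{pk}(\alpha_i)$. The same observation gives $\operatorname{pk}(uv)=\operatorname{pk}(u)+\operatorname{pk}(v)$ for any Dyck paths $u,v$.

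The coproduct statement follows at once. Each term $\alpha_{[i]}\otimes\alpha_{[p]\setminus[i]}$ of $\Delta(a)$ has degrees summing to $n$, and by additivity its peak numbers sum to $\operatorname{pk}(a)$.

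The second step is the insertion operation. Take a term of $u\divideontimes\beta$, where $u=\gamma_1\cdots\gamma_s$ and $\beta$ is an atom. I would show that its peak number equals $\operatorname{pk}(u)+\operatorname{pk}(\beta)$. Each inserted atom $\gamma_j$ is a contiguous block that begins with $0$ and ends with $1$. Consecutive inserted atoms sit in the same gap or in different gaps of $\beta$; in both cases the letters around each block are handled the same way. Consider an insertable gap of $\beta$. By definition its left letter is not $0$ or its right letter is not $1$; otherwise it would be a non-insertable gap. So the original pair at that gap is $00$, $10$ or $11$, which is not a peak. Splitting $\beta$ at such a gap therefore destroys no peak of $\beta$. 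The new adjacencies created are (left letter of the gap, $0$), $(1,0)$ between consecutive inserted atoms, and $(1,$ right letter of the gap$)$. The first is a peak only if the left letter is $0$, but a gap with left letter $0$ has right letter $0$ after insertion... Rather than argue this way, I would check the cases directly: $x0$ is a peak only if $x=0$, which would need $0\,0$ and is not a peak, and $1y$ is never a peak. So no new peaks appear. Together with the fact that non-insertable gaps are never used, this gives $\operatorname{pk}=\operatorname{pk}(u)+\operatorname{pk}(\beta)$. This is the key step; the only care needed is to confirm that an insertable gap never carries the pattern $01$. That holds because parentheses are placed exactly to the left of $0$'s and to the right of $1$'s.

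The third step assembles the product. Every term of $a\insshuff b$ arises from some $w\in\operatorname{Sh}(a,b)$ by replacing disjoint insertion blocks with terms of $(\gamma_1\cdots\gamma_s)\divideontimes\eta$ and concatenating. Concatenation additivity and the second step give $\operatorname{pk}=\operatorname{pk}(a)+\operatorname{pk}(b)$, and degrees add by Theorem~\ref{theorem1}. Extending bilinearly gives $H_{n,k}\insshuff H_{m,l}\subseteq H_{n+m,k+l}$. The unit $\epsilon$ lies in $H_{0,0}$, and $\nu$ vanishes off $H_{0,0}$.

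For the antipodes, Proposition~\ref{antipode} writes $\theta_l(a)$ and $\theta_r(a)$ as signed sums of bracketed products of the segments $\alpha_{[i_j+1,\ldots]}$. By additivity the segments have degrees summing to $n$ and peak numbers summing to $\operatorname{pk}(a)$. Applying the product bigrading repeatedly, every bracketed product lies in $H_{n,\operatorname{pk}(a)}$. Therefore both antipodes preserve each $H_{n,k}$, and $H$ is a bigraded non-associative Hopf algebra.
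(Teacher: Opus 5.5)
Your proof is correct and takes essentially the same route as the paper. Atom junctions are $10$ pairs, an insertable gap never carries $01$, and the new adjacencies $c0$, $10$, $1d$ created by an insertion are never $01$, so peak number adds under both the product and deconcatenation. The one difference is that you deduce antipode invariance from the closed formulas of Proposition~\ref{antipode}, while the paper uses the antipode recursions; both are valid. One expository fix: a pair $x0$ is never a peak simply because it ends in $0$, so replace the abandoned sentence and the phrase ``$x0$ is a peak only if $x=0$'' with that observation.
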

\begin{proof}
Every inserted path begins with $0$ and ends with $1$. Inserting it into a gap replaces an adjacent pair $cd$ by $c\,(\text{inserted path})\,d$. Since the gap is insertable, $cd\neq01$, so no existing peak is removed. The two new adjacent pairs are $c0$ and $1d$, neither of which is $01$, so no new peak is created. Boundaries between consecutive inserted atoms, or between concatenated output atoms, are $10$ and create no peaks either. Hence every term $c$ in $a\insshuff b$ satisfies
\[
\operatorname{pk}(c)=\operatorname{pk}(a)+\operatorname{pk}(b).
\]
An internal cut between atoms separates a $10$ pair, so it preserves the total peak number of the two factors; the end cuts add an empty factor. Together with additivity of path degree, this proves the product and coproduct assertions. The unit has bidegree $(0,0)$, and the counit vanishes in positive path degree. Finally, the left and right antipode recursions preserve both degrees.
\end{proof}

\begin{proposition}\label{prop:primitives}
The primitive subspace of $H$ is
\[
\operatorname{Prim}(H):=\{u\in H:\Delta(u)=\epsilon\otimes u+u\otimes\epsilon\}
=\mathbb{K}\mathbb{A}.
\]
Moreover, for $u,v,w\in\operatorname{Prim}(H)$, the associator
\[
\operatorname{as}(u,v,w)=(u\insshuff v)\insshuff w-u\insshuff(v\insshuff w)
\]
is primitive.
\end{proposition}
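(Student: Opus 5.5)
The proof has two parts. The first identifies the primitives directly from the explicit form of $\Delta$. The second uses Theorem~\ref{theorem3}, that $\Delta$ is multiplicative, to expand the coproduct of an associator and show that all non-primitive terms cancel.

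\emph{Primitives.} If $\alpha$ is an atom, then $\Delta(\alpha)=\epsilon\otimes\alpha+\alpha\otimes\epsilon$, since its decomposition has $p=1$. By linearity, $\mathbb{K}\mathbb{A}\subseteq\operatorname{Prim}(H)$.

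For the converse, take $u=\sum_a c_a a\in\operatorname{Prim}(H)$. Applying $\nu\otimes\nu$ to the primitivity identity gives $\nu(u)=2\nu(u)$, so the coefficient of $\epsilon$ is zero. Now consider the reduced coproduct
\[
\bar\Delta(u)=\Delta(u)-\epsilon\otimes u-u\otimes\epsilon=\sum_a c_a\sum_{i=1}^{p(a)-1}\alpha_{[i]}\otimes\alpha_{[p]\setminus[i]}.
\]
For each path $a$ with $p(a)\ge2$, look at the tensor $\alpha_1\otimes\alpha_{[2,p]}$. Concatenation recovers $a$ from this tensor, and it arises in the expansion only from $a$ with $i=1$. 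Indeed, if $b\neq a$ contributed it with cut $j$, then $b=\alpha_1\alpha_{[2,p]}=a$. So the basis tensors $\alpha_1\otimes\alpha_{[2,p]}$ are distinct for distinct $a$. Their coefficients in $\bar\Delta(u)$ are exactly $c_a$, which forces $c_a=0$ whenever $p(a)\ge 2$. Hence $u\in\mathbb{K}\mathbb{A}$.

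\emph{Associators.} By Theorem~\ref{theorem3}, $\Delta(x\insshuff y)=\Delta(x)\insshuff\Delta(y)$, where the product on $H\otimes H$ is componentwise, and $\epsilon$ is a two-sided unit. For primitive $u,v$,
\[
\Delta(u\insshuff v)=\epsilon\otimes(u\insshuff v)+(u\insshuff v)\otimes\epsilon+u\otimes v+v\otimes u,
\]
using commutativity (Theorem~\ref{theorem1}) to write $\epsilon\insshuff v\otimes u\insshuff\epsilon=v\otimes u$. Applying multiplicativity again to $(u\insshuff v)\insshuff w$ and expanding gives
\[
\Delta((u\insshuff v)\insshuff w)=\epsilon\otimes (uv)w+(uv)w\otimes\epsilon+\big[uv\otimes w+w\otimes uv+uw\otimes v+vw\otimes u+u\otimes vw+v\otimes uw\big],
\]
where juxtaposition abbreviates $\insshuff$. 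The symmetric computation for $u\insshuff(v\insshuff w)$ gives the same form with $u(vw)$ as the top term. Its mixed terms come out as $vw\otimes u$, $u\otimes vw$, $uv\otimes w$, $w\otimes uv$, $uw\otimes v$, and $v\otimes uw$, where commutativity is used to identify $vu=uv$ and $wu=uw$. The two bracket sums therefore coincide as multisets, so their difference is
\[
\Delta(\operatorname{as}(u,v,w))=\epsilon\otimes\operatorname{as}(u,v,w)+\operatorname{as}(u,v,w)\otimes\epsilon.
\]

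The only delicate step is the bookkeeping in this comparison. One must check that each mixed term involves a product of just two elements, so that no associativity is needed, and that commutativity is the only identification used to match the terms.
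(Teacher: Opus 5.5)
Your proof is correct and takes the same route as the paper. Atoms are primitive. The converse follows from $\nu(u)=0$ together with injectivity of the reduced coproduct on paths with at least two atoms; your concatenation argument is a tidy version of the paper's one-atom-on-the-left component. The associator claim comes from expanding with $\Delta(x\insshuff y)=\Delta(x)\insshuff\Delta(y)$, which you write out explicitly where the paper only says the mixed terms cancel.

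One small correction: the identifications you credit to commutativity, e.g.\ $(\epsilon\insshuff v)\otimes(u\insshuff\epsilon)=v\otimes u$, use only that $\epsilon$ is a two-sided unit. The six mixed terms of the two bracketings agree verbatim without commutativity.
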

\begin{proof}
Every atom is primitive by the definition of $\Delta$. For the reverse inclusion, let $u$ be primitive. The counit axioms imply that $\nu(u)=0$. For $v\in\ker\nu$, define the \emph{reduced coproduct} by
\[
\overline{\Delta}(v)=\Delta(v)-\epsilon\otimes v-v\otimes\epsilon.
\]
Thus $\overline{\Delta}(u)=0$. For a nonempty path, the reduced coproduct removes the two end cuts:
\[
\overline{\Delta}(\alpha_1\cdots\alpha_p)
=\sum_{i=1}^{p-1}\alpha_1\cdots\alpha_i\otimes\alpha_{i+1}\cdots\alpha_p.
\]
For each $p\ge2$, on the span of paths with $p$ atoms, its component with one atom on the left and $p-1$ atoms on the right sends
\[
\alpha_1\cdots\alpha_p\longmapsto\alpha_1\otimes(\alpha_2\cdots\alpha_p),
\]
which is injective because different atom sequences give different basis tensors. Components with different values of $p$ have different numbers of atoms in the right tensor factor. Hence $\overline{\Delta}(u)=0$ forces every component of $u$ with two or more atoms to vanish. Since $\nu(u)=0$, the element $u$ is a linear combination of atoms, proving the first assertion.

For primitive $u,v,w$, expand the coproduct of the associator using compatibility. All mixed tensor terms cancel, leaving
\[
\Delta(\operatorname{as}(u,v,w))
=\epsilon\otimes\operatorname{as}(u,v,w)+\operatorname{as}(u,v,w)\otimes\epsilon.
\]
This proves the second assertion.
\end{proof}

In particular, the associator gives a ternary operation on the atom space. For atoms $\alpha,\beta,\gamma$, Proposition~\ref{antipode} yields
\[
(\theta_l-\theta_r)(\alpha\beta\gamma)=-\operatorname{as}(\alpha,\beta,\gamma)\in\mathbb{K}\mathbb{A}.
\]
Thus the three-atom antipode discrepancy is a primitive operation, providing a concrete starting point for the operadic question in Section~\ref{sec9}.

%%%%%%%%%%%%%%%%%%%%%%%%%%%-8-%%%%%%%%%%%%%%%%
\section{Convolution Product}\label{sec7}
Convolution expresses the antipode identities as inverse identities in an algebra of linear maps. In the present setting, the left and right antipodes become respectively left and right convolution inverses of the identity map. We record this interpretation and explain how convolution inherits non-associativity from the underlying product.

Let $H=(\mathbb{K}\mathbb{D},\insshuff,\mu,\Delta,\nu)$, and write $m$ for the shuffle-insertion product. Let $\mathcal{L}(H,H)$ be the vector space of linear transformations of $H$. The \emph{convolution product} $\ast$ on $\mathcal{L}(H,H)$ is defined by
\[
f\ast g=m\circ(f\otimes g)\circ\Delta.
\]
Set $e=\mu\circ\nu$. Writing $\Delta(h)=\sum h_{(1)}\otimes h_{(2)}$, the unit and counit identities give
\begin{align*}
(f\ast e)(h)&=\sum f(h_{(1)})\nu(h_{(2)})=f(h),\\
(e\ast f)(h)&=\sum\nu(h_{(1)})f(h_{(2)})=f(h).
\end{align*}
Thus $e$ is the unit for convolution, and the defining antipode identities become
\[
\theta_l\ast\mathrm{id}=e=\mathrm{id}\ast\theta_r.
\]

\begin{proposition}
The convolution product on $\mathcal{L}(H,H)$ is non-associative.
\end{proposition}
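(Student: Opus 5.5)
To prove non-associativity it suffices to exhibit three linear maps $f,g,h\in\mathcal{L}(H,H)$ and one input on which $(f\ast g)\ast h$ and $f\ast(g\ast h)$ differ. The plan is to choose maps that single out one atom each, so that convolution evaluated on a three-atom path collapses to a single associator of the product. Concretely, fix the atoms from the proof of Theorem~\ref{theorem1}:
\[
\alpha=(0(01)(01)1),\qquad \beta=(01),\qquad \gamma=(0(01)1).
\]
Let $P_\alpha$, $P_\beta$, $P_\gamma$ be the linear maps sending the basis path $\alpha$ (respectively $\beta$, $\gamma$) to itself and every other Dyck path, including $\epsilon$, to $0$. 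Evaluate both bracketings on the path $a=\alpha\beta\gamma$.

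First compute $(P_\alpha\ast P_\beta)\ast P_\gamma$ at $a$. We have $\Delta(a)=\sum_{i=0}^3\alpha_{[i]}\otimes\alpha_{[3]\setminus[i]}$. Since $P_\gamma$ kills everything except the atom $\gamma$, only the cut $\alpha\beta\otimes\gamma$ survives, giving $(P_\alpha\ast P_\beta)(\alpha\beta)\insshuff\gamma$. Next, $\Delta(\alpha\beta)=\epsilon\otimes\alpha\beta+\alpha\otimes\beta+\alpha\beta\otimes\epsilon$, and only the middle term survives. Hence $(P_\alpha\ast P_\beta)(\alpha\beta)=\alpha\insshuff\beta$, and the whole expression equals $(\alpha\insshuff\beta)\insshuff\gamma$. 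Symmetrically, $(P_\alpha\ast(P_\beta\ast P_\gamma))(a)$ keeps only the cut $\alpha\otimes\beta\gamma$, and then $(P_\beta\ast P_\gamma)(\beta\gamma)=\beta\insshuff\gamma$. So this bracketing gives $\alpha\insshuff(\beta\insshuff\gamma)$. The only care needed here is to confirm that no other cut contributes. This holds because each $P$ vanishes on $\epsilon$ and on every path with more than one atom, so only cuts producing single atoms in the right slots survive.

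Therefore
\[
\big((P_\alpha\ast P_\beta)\ast P_\gamma-P_\alpha\ast(P_\beta\ast P_\gamma)\big)(\alpha\beta\gamma)=\operatorname{as}(\alpha,\beta,\gamma).
\]
By the coefficient computation in the proof of Theorem~\ref{theorem1}, the coefficient of $w=(0\delta(01)1)$ in this associator is $3-0=3\neq0$. Hence the convolution is non-associative.

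The main obstacle is only bookkeeping: checking that each projection really filters out all other cuts. Alternatively, the same argument shows more generally that $\ast$ restricted to maps supported on atoms reproduces the product, so any associator of $\insshuff$ transfers to $\ast$.
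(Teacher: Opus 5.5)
Your proof is correct, and it takes a different route from the paper's. Each $P$ vanishes on $\epsilon$ and on every path with more than one atom. Hence only the cut $\alpha\beta\otimes\gamma$ (respectively $\alpha\otimes\beta\gamma$) survives, and the two bracketings at $\alpha\beta\gamma$ differ by exactly $\operatorname{as}(\alpha,\beta,\gamma)$. Its coefficient at $w$ is $3$ by the proof of Theorem~\ref{theorem1}.

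The paper instead uses the maps $j_x(h)=\nu(h)x$. The counit axioms alone give $j_x\ast j_y=j_{x\insshuff y}$, and $j_x(\epsilon)=x$ makes $x\mapsto j_x$ injective. So $(H,\insshuff)$ embeds as a subalgebra of $(\mathcal{L}(H,H),\ast)$, and every associator transfers at once. That argument uses nothing about $\Delta$ beyond counitality, which is why it extends verbatim to any unital bialgebra (the paper's subsequent remark). Its witnesses are supported in degree $0$ and evaluated at $\epsilon$.

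Your argument is the dual picture. It uses the deconcatenation form of the cut-box coproduct: maps concentrated on atoms, evaluated on a three-atom path, read off the product of their values. It is more hands-on and gives witnesses in positive degree, but it is tied to this specific coproduct.

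One small correction to your closing remark: maps vanishing off atoms are not closed under $\ast$, so this is not literally a restriction to a subalgebra. What your computation actually shows is that for such $f,g,h$, $\bigl((f\ast g)\ast h-f\ast(g\ast h)\bigr)(\alpha_1\alpha_2\alpha_3)=\operatorname{as}\bigl(f(\alpha_1),g(\alpha_2),h(\alpha_3)\bigr)$. That is enough to transfer any associator of $\insshuff$.
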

\begin{proof}
For $x\in H$, define $j_x\in\mathcal{L}(H,H)$ by $j_x(h)=\nu(h)x$. Then
\begin{align*}
(j_x\ast j_y)(h)
&=\sum\nu(h_{(1)})\nu(h_{(2)})m(x,y)\\
&=\nu(h)m(x,y).
\end{align*}
Thus $j_x\ast j_y=j_{m(x,y)}$. Moreover, $j_x(\epsilon)=x$, so $x\mapsto j_x$ is an injective algebra homomorphism from $H$ into $\mathcal{L}(H,H)$. The non-associativity proved in Theorem~\ref{theorem1} therefore implies that convolution is non-associative.
\end{proof}

\begin{remark}
The same argument applies to any unital, possibly non-associative bialgebra with coassociative coproduct. Its convolution product is associative if and only if its underlying product is associative: one implication follows from the embedding above, and the other follows from coassociativity and associativity.
\end{remark}

%%%%%%%%%%%%%%%%%%%%%%%%%%%-9-%%%%%%%%%%%%%%%%
\section{A Deformation of the Shuffle Hopf Algebra}\label{sec8}
The shuffle-insertion algebra provides a concrete setting for studying coalgebra-compatible deformations of shuffle multiplication with relaxed associativity. Foissy, Patras, and Thibon~\cite{Foissy-Patras-Thibon} study associative deformations of shuffle Hopf algebras, including quasi-shuffle structures. In the present setting, filtering Dyck paths by their number of atoms recovers the ordinary shuffle algebra as the associated graded algebra. Weighting insertion terms by the decrease in atom number produces a parameterized family connecting the two products while preserving the cut-box coproduct. The explicit non-associativity example and antipode formulas then allow one to examine how insertion affects associativity and inversion, providing a starting point for comparing more general insertion rules.

Let $H=\mathbb{KD}$, and let $\ell(a)$ denote the number of atoms of a Dyck path $a$, with $\ell(\epsilon)=0$. Define the \emph{atom filtration} by $F_{-1}H=\{0\}$  and
\[
F_rH=\operatorname{span}_{\mathbb{K}}\{a\in\mathbb{D}:\ell(a)\le r\},\qquad r\ge0.
\]
Write $a\insshuff b=\sum_{c\in\mathbb{D}}m_{a,b}^{c}\,c$. Each insertion block containing $s$ inserted atoms and one receiving atom becomes a single atom, decreasing the atom count by $s$. Thus $\ell(c)\le\ell(a)+\ell(b)$ whenever $m_{a,b}^{c}\ne0$. For an indeterminate $t$, define a $\mathbb{K}[t]$-bilinear product on $H[t]=H\otimes_{\mathbb{K}}\mathbb{K}[t]$ by
\begin{equation}\label{eq:deformation-product}
a\star_t b=\sum_{c\in\mathbb{D}}m_{a,b}^{c}\,t^{\ell(a)+\ell(b)-\ell(c)}c.
\end{equation}
Extend $\mu$, $\Delta$, and $\nu$ by $\mathbb{K}[t]$-linearity.

\begin{proposition}\label{prop:shuffle-deformation}
The atom filtration makes $H$ a filtered non-associative bialgebra whose associated graded bialgebra is the ordinary shuffle Hopf algebra $(\mathbb{KD},\shuff,\mu,\Delta,\nu)$. The product $\star_t$ and the extended structure maps make $H[t]$ a graded connected, possibly non-associative bialgebra with commutative product, graded by path degree. Its specializations satisfy $\star_0=\shuff$ and $\star_1=\insshuff$. For every $\lambda\in\mathbb{K}^{\times}$, the map
\[
D_\lambda(a)=\lambda^{\ell(a)}a
\]
is a Hopf isomorphism from $(H,\star_\lambda,\mu,\Delta,\nu)$ to $(H,\insshuff,\mu,\Delta,\nu)$. In particular, every nonzero specialization is non-associative. 
\end{proposition}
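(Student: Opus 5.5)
The plan has five steps: the filtration, the deformed product, the specializations, the rescaling isomorphism $D_\lambda$, and non-associativity.

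\textbf{Filtration and associated graded.} By Remark~\ref{rem:atominins}, every term $c$ of $a\insshuff b$ satisfies $\ell(c)\le\ell(a)+\ell(b)$. Equality holds exactly when the empty collection of insertion blocks is chosen, since each nonempty block with $s$ inserted atoms lowers the atom count by $s\ge1$. Hence $F_rH\insshuff F_sH\subseteq F_{r+s}H$, so the product is filtered. The cut-box coproduct preserves atom counts additively, so $\Delta(F_rH)\subseteq\sum_{i+j=r}F_iH\otimes F_jH$, and the coproduct on $\operatorname{gr}H\cong\mathbb{KD}$ is just $\Delta$. In $\operatorname{gr}$, the product of $a$ and $b$ keeps only the terms with $\ell(c)=\ell(a)+\ell(b)$. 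Summing over $w\in\operatorname{Sh}(a,b)$ with the empty collection, these are exactly $a\shuff b$. Thus $\operatorname{gr}H$ is the shuffle Hopf algebra.

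\textbf{The deformed product.} The exponent in \eqref{eq:deformation-product} is a nonnegative integer, so $\star_t$ is well defined on $H[t]$. Commutativity follows from $m^c_{a,b}=m^c_{b,a}$ (Theorem~\ref{theorem1}), and path degree is clearly preserved. For compatibility, I would rerun the proof of Theorem~\ref{theorem3} with weights. In that bijection, a contribution splits at the cut into a prefix contribution and a suffix contribution. The atom-count deficit $\ell(a)+\ell(b)-\ell(c)$ is the total number of inserted atoms, and this total is additive across the two pieces. Therefore each $t$-weighted term on the left matches the product of weights on the right, which gives $\Delta(a\star_t b)=\Delta(a)\star_t\Delta(b)$. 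The unit and counit conditions are as before, and connectedness is unchanged.

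\textbf{Specializations.} Setting $t=0$ kills every term with positive deficit, which recovers $\shuff$ by the first step. Setting $t=1$ gives $\insshuff$ directly.

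\textbf{The isomorphism $D_\lambda$.} For $\lambda\ne0$, $D_\lambda$ is invertible with inverse $D_{\lambda^{-1}}$, and it fixes $\epsilon$. It commutes with $\nu$. It also satisfies $(D_\lambda\otimes D_\lambda)\Delta=\Delta D_\lambda$, because $\ell$ is additive over cuts. For the product, compute
\[
D_\lambda(a\star_\lambda b)=\sum_c m^c_{a,b}\lambda^{\ell(a)+\ell(b)-\ell(c)}\lambda^{\ell(c)}c=\lambda^{\ell(a)+\ell(b)}\,a\insshuff b=D_\lambda(a)\insshuff D_\lambda(b).
\]
A bialgebra isomorphism intertwines the unique left and right antipodes, so $D_\lambda$ is a Hopf isomorphism.

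\textbf{Non-associativity.} Since $D_\lambda$ is an algebra isomorphism onto $(H,\insshuff)$, which is non-associative by Theorem~\ref{theorem1}, every nonzero specialization $\star_\lambda$ is non-associative.

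The only step requiring care is the weighted compatibility in the second step, that is, checking that the deficit really is additive under the cut bijection. This holds because a cut never splits an insertion block.
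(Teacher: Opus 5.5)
Your proof is correct. It coincides with the paper's for the filtration and associated graded, the specializations $\star_0=\shuff$ and $\star_1=\insshuff$, the computation $D_\lambda(a\star_\lambda b)=D_\lambda(a)\insshuff D_\lambda(b)$, and the transfer of non-associativity. The one genuine difference is how you show that $(H[t],\star_t,\mu,\Delta,\nu)$ is a commutative, unital, coproduct-compatible bialgebra.

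The paper does not reopen the combinatorics. It passes to $\mathbb{K}[t,t^{-1}]$, where $D_t(a)=t^{\ell(a)}a$ is an invertible coalgebra map fixing unit and counit and satisfying $D_t(a\star_t b)=D_t(a)\insshuff D_t(b)$. All the axioms for $\star_t$ are then inherited from Theorem~\ref{theorem3} by transport of structure. They descend to $H[t]$ because the structure constants are polynomial in $t$. The same identity, specialized at $t=\lambda\neq0$, then gives the isomorphisms $D_\lambda$.

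You instead rerun the cut bijection from the proof of Theorem~\ref{theorem3} with weights. The deficit $\ell(a)+\ell(b)-\ell(c)$ is additive across a cut because $\ell$ is. You get commutativity directly from $m^c_{a,b}=m^c_{b,a}$.

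The paper's route is shorter and lets one intertwining identity do double duty. Yours stays over $\mathbb{K}[t]$ throughout, so the degenerate value $t=0$, where no rescaling exists, is covered without a localization-and-descent step. It also isolates the real mechanism: cuts never split insertion blocks. Hence the same argument would give coproduct-compatible products for any weighting of contributions that is multiplicative over insertion blocks, not only for the weight $t^{\ell(a)+\ell(b)-\ell(c)}$ that a diagonal rescaling can undo.
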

\begin{proof}
The bound on atom counts gives $F_rH\insshuff F_sH\subseteq F_{r+s}H$. The contributions with no decrease in atom count are exactly those with no insertion blocks, namely the ordinary shuffles. Since deconcatenation preserves the total atom count, the coproduct also respects the filtration. This identifies the associated graded bialgebra with the ordinary shuffle Hopf algebra and proves $\star_0=\shuff$. The equality $\star_1=\insshuff$ follows directly from \eqref{eq:deformation-product}.

Over $\mathbb{K}[t,t^{-1}]$, the map $D_t(a)=t^{\ell(a)}a$ is a coalgebra automorphism preserving the unit, counit, and path degree, and
\[
D_t\bigl(a\star_t b\bigr)=D_t(a)\insshuff D_t(b).
\]
Thus $\star_t$ is commutative, unital, and compatible with the coproduct and counit. These identities hold over $\mathbb{K}[t]$ as well, since all structure constants in \eqref{eq:deformation-product} are polynomials in $t$. The path-degree grading is connected. Specializing $t$ to a nonzero $\lambda$ gives the stated isomorphism, and non-associativity follows from Theorem~\ref{theorem1}.
\end{proof}

%%%%%%%%%%%%%%%%%%%%%%%%%%%-10-%%%%%%%%%%%%%%%%
\section{Open Questions}\label{sec9}
We conclude our investigation of the non-associative Hopf algebra
\[ 
H = (\mathbb{K}\mathbb{D}, \insshuff, \mu, \Delta, \nu). 
\]
For the interested reader, we leave here a few open questions for further exploration.

\begin{question}
On the coalgebra side, $H$ is well-known to be cofree. Proposition~\ref{prop:not-free} shows that it is not free as a commutative non-associative algebra. What is an explicit generating set for $H$, and what are its defining relations?
\end{question}

\begin{question}
Is there a useful description of the graded dual of \( H \)?
\end{question}

\begin{question}
What is the role of the non-associative Hopf algebra \( H \) among graded non-associative commutative algebras? More specifically, what universal property does \( H \) satisfy in that category?
Does the discrepancy between $\theta_l$ and $\theta_r$ encode meaningful combinatorial data (such as bracketing structures in Dyck paths)? In particular, can it help determine generating functions for Dyck-path statistics?
\end{question}

\begin{question}
From an operadic perspective, the algebra structure of $H$ is of type $\mathrm{ComMag}$. In the framework of Loday~\cite[Section~2.2]{Loday}, it is natural to study the triple
\[
(\mathrm{As},\mathrm{ComMag},\mathcal{P}),
\]
where $\mathcal{P}$ is the operad governing primitive operations. The compatibility relation associated with this triple is the standard Hopf compatibility
\[
\Delta(u\insshuff v)=\Delta(u)\insshuff\Delta(v),
\]
with the componentwise product on the tensor product. Little is known about this triple with standard Hopf compatibility and its associated Hopf algebras. It would be interesting to study it with $H$ as a concrete example: Proposition~\ref{prop:primitives} identifies its primitive space and exhibits the associator as a primitive ternary operation.
\end{question}

\begin{question}
In the theory of combinatorial Hopf algebras, several commutative Hopf algebras admit natural noncommutative analogues (see~\cite{8} and references therein). Does \( H \) also have a natural noncommutative counterpart?
\end{question}
%\begin{Backmatter}

%\printaddress
%\end{Backmatter}


\begin{thebibliography}{99}

\bibitem{2} M. Aguiar, N. Bergeron, F. Sottile, 
Combinatorial Hopf algebras and generalized Dehn–Sommerville relations. 
\textit{Compositio Mathematica} \textbf{142} (2006), 1--30.

\bibitem{15} V. A. Artamonov, 
The structure of Hopf algebras. 
\textit{Journal of Mathematical Sciences} \textbf{71} (1994), 2289--2328.


\bibitem{sagan-Benedetti} C. Benedetti, B. Sagan, 
Antipodes and involutions. 
\textit{Journal of Combinatorial Theory. Series A} \textbf{148} (2017), 275--315.

\bibitem{4} N. Bergeron, C. Ceballos, 
A Hopf algebra of subword complexes. 
\textit{Advances in Mathematics} \textbf{305} (2017), 1163--1201.

\bibitem{8} N. Bergeron, R. S. Gonz\'alez D'Le\'on, S. X. Li, C. Y. A. Pang, Y. Vargas, 
Hopf algebras of parking functions and decorated planar trees. 
\textit{Advances in Applied Mathematics} \textbf{143} (2023), 102436.

\bibitem{3} N. Bergeron, H. Li, 
Algebraic structures on Grothendieck groups of a tower of algebras. 
\textit{Journal of Algebra} \textbf{321} (2009), 2068--2084.

\bibitem{7} J. Dong, H. Li, 
Hopf algebra of labeled simple graphs arising from super-shuffle product. 
\textit{Mathematics Modelling and Control} \textbf{4} (2024), 32--43.

\bibitem{Foissy-Patras-Thibon} L. Foissy, F. Patras, J.-Y. Thibon,
Deformations of shuffles and quasi-shuffles.
\textit{Annales de l'Institut Fourier} \textbf{66} (2016), 209--237.

\bibitem{10} L. Gerritzen, R. Holtkamp, 
Hopf co-addition for free magma algebras and the non-associative Hausdorff series. 
\textit{Journal of Algebra} \textbf{265} (2003), 264--284.

\bibitem{14} J. Haglund, J. Morse, M. Zabrocki, 
A compositional shuffle conjecture specifying touch points of the {D}yck path. 
\textit{Canadian Journal of Mathematics} \textbf{64-4} (2012), 822--844.

\bibitem{1} H. Hopf, 
\"Uber die Topologie der Gruppen-Mannigfaltigkeiten und ihrer Verallgemeinerungen. 
\textit{Annals of Mathematics} \textbf{42} (1941), 22--52.

\bibitem{6} M. Liu, H. Li,  
Hopf algebra on permutations arising from super-shuffle product. 
\textit{Symmetry} \textbf{13} (2021), 1010.

\bibitem{Loday} J.-L. Loday, 
Generalized bialgebras and triples of operads. 
\textit{Ast\'erisque} \textbf{320} (2008), x+116 pp.

\bibitem{16} S. V. Ludkowski, 
Non-associative algebras, rings and modules over them. 
\textit{Mathematics} \textbf{11} (2023), 1714.   

\bibitem{17} D. Manchon, 
Hopf algebras in renormalisation. 
\textit{Handbook of Algebra} \textbf{5} (2008), 365--427.

\bibitem{11} J. Mostovoy, J. M. P\'erez-Izquierdo, I. P. Shestakov, 
Hopf algebras in non-associative Lie theory. 
\textit{Bulletin of Mathematical Sciences} \textbf{4} (2014), 129--173.

\bibitem{12} J. M. P\'erez-Izquierdo, 
Algebras, hyperalgebras, non-associative bialgebras and loops. 
\textit{Advances in Mathematics} \textbf{208} (2007), 834--876.

\bibitem{Shirshov} A. I. Shirshov,
Subalgebras of free commutative and free anticommutative algebras.
\textit{Mat. Sb. (N.S.)} \textbf{34(76)} (1954), no.~1, 81--88 (in Russian).

\bibitem{18} R. Sivaraman, 
Partitions and compositions. 
\textit{International Journal of Scientific \& Engineering Research} \textbf{7} (2016), 684--691.

\bibitem{9} S. Song, H. Li, 
A Hopf algebra on (0,1)-matrices. 
\textit{Mathematics Modelling and Control} \textbf{5} (2025), 193--201.

\bibitem{Takeuchi} M. Takeuchi, 
Free Hopf algebras generated by coalgebras. 
\textit{Journal of the Mathematical Society of Japan} \textbf{23} (1971), 561--582.

\bibitem{13} U. Umirbaev, V. Kharchenko, 
Free braided non-associative Hopf algebras and Sabinin $\tau$-algebras. 
\textit{Journal of Algebra} \textbf{492} (2017), 130--156.

\bibitem{5} M. Zhao, H. Li, 
A pair of dual Hopf algebras on permutations. 
\textit{AIMS Mathematics} \textbf{6} (2021), 5106--5123.

\end{thebibliography}
\end{document}